\documentclass[12pt]{article} 
\usepackage[english]{babel}

\usepackage[T2A]{fontenc} \usepackage[english]{babel}

\usepackage[english]{babel}
\usepackage{amsthm, amsmath, amssymb, amsbsy, amscd, amsfonts, latexsym, euscript,epsf,stackrel}
\usepackage{authblk}
\usepackage{graphicx} 

\usepackage{placeins}
\usepackage{graphicx}
\usepackage{epsfig}
\newtheorem{theorem}{Theorem}[section]
\newtheorem{theor}[theorem]{Theorem}
\newtheorem{corollary}[theorem]{Corollary}
\newtheorem{lemma}[theorem]{Lemma}
\newtheorem{proposition}[theorem]{Proposition}

\newtheorem{definition}[theorem]{Definition}
\newtheorem{example}[theorem]{Example}
\newtheorem{remark}[theorem]{Remark}

\usepackage[all,cmtip]{xy}
\usepackage{xcolor}
\usepackage{xfrac}
\usepackage{pb-diagram}

\usepackage{tikz}
\usetikzlibrary{positioning,arrows}

\newcommand\vtr[2]{{
		(#1_1, \ldots, #1_{#2})
}}

\def\bea{\begin{eqnarray}}
	\def\eea{\end{eqnarray}}

\title{
	Embedding a graded matrix algebra into an elementary graded
}
\author{Pavel Sokolov}

\begin{document}
	
	\maketitle
	\begin{abstract}

		M.V.\,Zaicev and S.K.\,Segal, as well as S.\,Dăscălescu, B.\,Ion, C.\,Năstăsescu, and D.\,Raios Montes studied certain gradings on matrix rings and algebras - ‘elementary’ gradings. However, examples of gradings on a matrix ring that are not elementary are known. 
		In the present article, we show that any subalgebra of a full matrix ring over an arbitrary graded division ring can be embedded in an elementary graded matrix ring.

		\textit{Keywords:} Graded algebra, grading, matrix algebra, Artinian ring, simple ring, group, graph.
		
	\end{abstract}
	
	\section{Introduction}
	
	Graded rings and algebras are widely used in geometry and topology. For example, in homology and cohomology theories, graded abelian groups of chains, boundaries, and homologies arise naturally, and are endowed with the structure of a graded module over a graded algebra \cite{JWick}. Also, in works \cite{Tu2grp, TuHom3}, a certain class of graded algebras is considered to study two-dimensional homotopical quantum field theory.
	
	As a natural example of a graded algebra, one can consider the group algebra $A = \mathbb F[G]$ over a field $\mathbb F$. In this case, the homogeneous components are given as one-dimensional spans of elements of the group $G$, i.e., $A_g = \mathbb{F}g$. 
	Another example of a graded algebra is the algebra of polynomials $R = K[x_1, \ldots, x_n]$, where a $\mathbb Z$-grading is naturally introduced as follows: $R_n$ is the linear span of all monomials of degree $n$ at $n \geqslant 0$ and $R_n = 0$ otherwise. Obviously, in this case, $R_nR_m \subseteq R_{n+m}$ and $R = \bigoplus_{n\in \mathbb{Z}} R_n$.
	
	In the previous example, one can consider a grading not by the group $(\mathbb Z, +)$, but by the monoid $(\mathbb N, +)$. In this case, an $\mathbb N$-graded algebra can be considered as a filtered algebra as follows. Let $F_n$ be the vector space spanned by monomials of degree no greater than $n$, i.e., $F_n = \bigoplus_{i = 0}^nR_n$, then, $$\{0\}\subseteq F_0 \subseteq F_1 \subseteq \ldots \subseteq F_m \subseteq \ldots \subseteq K[x_1, \ldots, x_n].$$
	Obviously, the following properties are satisfied:  $\displaystyle K[x_1, \ldots, x_n] = \bigcup_{i\in \mathbb{N}} F_i$, $F_kF_t \subseteq F_{k+t}$.
	
	However, $\mathbb{Z}$-graded algebras can be generalized not only to filtered algebras, but also to algebras graded by an arbitrary group $G$. A group algebra with the natural grading is one of such examples.
	
	Let us consider one more example of graded algebras. Let $G$ be a group, $V$ be a vector space of dimension $n$ over a division ring $\mathbb D$. The algebra of endomorphisms of this space is $M_n(\mathbb D)$. One can consider the vector space $V$ as a graded $M_n(\mathbb D)$-module. It is sufficient to fix a basis $e_1, \ldots, e_n$ of this space and arbitrary elements $g_1 , \ldots , g_n$ of the group $G$. The homogeneous components of the space V are defined as follows: $V_{g_i} = span \{e_i\}$. Then, the algebra $M_n(\mathbb D)$ naturally acts on $V$ by the following rule: $E_{ij}V_{g_j} \subseteq V_{g_i}$, where $E_{ij}$ are the matrix units. Thus, the grading arises on the algebra $M_n(\mathbb D)$, such that $E_{ij} \in (M_n(\mathbb D))_{g_ig_j^{-1}}$. This grading is called elementary.
	
	Elementary gradings were studied in works $\cite{RiosMontes, BahSeg}$. Moreover, it was shown in these works that any simple Artinian ring, graded by a torsion-free group, is elementary graded.
	The authors of $\cite{BahSeg}$ gave an example of an $M_2(\mathbb{F})$ grading, where $\mathbb{F}$ is a field of characteristic not $2$, which is not elementary.
	
	The aim of this work is to research arbitrary gradings on matrix algebras and its relations with elementary gradings. This article is structured as follows. In Section \ref{Prel}, the main definitions and known results are given. In Section \ref{baseProps}, some simple properties of elementary gradings on $M_n(\mathbb D)$ are studied, which will be needed later. In Section \ref{subsec:essentialBasis}, we consider graded subalgebras of the algebra $M_n(\mathbb D)$ that have a basis consisting of homogeneous matrix units and their connection with elementary gradings. In Section \ref{sect:commonCase}, it is shown that any subalgebra of $M_n(\mathbb D)$ graded in an arbitrary way can be embedded into an elementary graded one.

	\section{Preliminaries} \label{Prel}
	
	We recall some basic definitions in this section. To begin with, let us remember the definition of a graded algebra. Consider an arbitrary group $G$, a ring $K$, and an algebra $L$ over $K$. The algebra $L$ is called $G$-graded, or briefly graded, if it is representable as a $K$-module in the form of a direct sum
	
	\[L = \bigoplus_{g\in G}L_g\]
	where all $L_g$ are $K$-modules and $L_gL_h \subseteq L_{gh}$ for all $g, h \in G$.
	In that case the modules $L_g$ will be called homogeneous components, and the elements $a \in L_g$ homogeneous of degree g, or briefly homogeneous elements.
	Consider two $G$-graded algebras $A = \displaystyle\bigoplus_{g\in G}A_g$, $B = \displaystyle\bigoplus_{g \in G}B_g$. A homomorphism of algebras $\varphi\colon A \to B$ is called a homomorphism of $G$-graded algebras if $\varphi(A_g) \subseteq B_g$ for all $g \in G$.
	Let us fix the following notation: the symbol $\mathbb D$ will denote an arbitrary division ring. A subalgebra $L \leqslant M_n(\mathbb D)$ of the full matrix algebra will be called a matrix algebra.
	
	Let us now recall the construction of an elementary graded algebra considered in work \cite{BahSeg}. Let $G$ be a group, $L = M_n(\mathbb D)$ be the algebra of square matrices of size $n \times n$ over the division ring $\mathbb D$. Fix a tuple $\overline g = (g_1, \ldots, g_n) \in G^n$. One can easily define the set $L_h$ for each $h \in G$ as follows:
	
	\[L_h = span\{E_{ij} \ | \ g_ig_j^{-1} = h\}.\]
	Then each set $L_h$ will be a $\mathbb D$-module, and the algebra $L$ is representable as a $\mathbb D$-module in the form $L = \bigoplus_{h \in G}L_h$, moreover $L_{h_1}L_{h_2} \subseteq L_{h_1h_2}$; i.e., the algebra L is G-graded. Such a grading is called elementary, and the algebra $L$ is elementary graded.

	\section{Properties of elementary gradings} \label{baseProps}

	In this section, we establish some properties of elementary gradings which will be needed later.
	
	\begin{proposition}\label{prop:shift}
		
		Let $L = M_n(\mathbb D)$ be an elementary $G$-graded algebra with respect to the tuple of elements $\overline{g} = \vtr{g}{n}$. Then, for each element $h \in G$, the elementary graded algebra $L$ with respect to the tuple $\overline{g}\cdot h = (g_1h,\ldots, g_nh)$ is such that $R_s = L_s$ for all $s \in G$.  
		
	\end{proposition}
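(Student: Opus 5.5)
The statement asserts that right-multiplying the defining tuple by a fixed $h$ does not change the grading. Here $R$ denotes $M_n(\mathbb D)$ with the elementary grading defined by $\overline g\cdot h$, and $L_s$, $R_s$ are the homogeneous components of degree $s$ for the two gradings. Since both gradings are defined by spanning sets of matrix units, the plan is to show that the two gradings assign each $E_{ij}$ the same degree. The spans then coincide automatically.

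First, I write out the definitions. We have $L_s = span\{E_{ij} \mid g_ig_j^{-1} = s\}$ and $R_s = span\{E_{ij} \mid (g_ih)(g_jh)^{-1} = s\}$. Next, I compute $(g_ih)(g_jh)^{-1} = g_ih h^{-1} g_j^{-1} = g_ig_j^{-1}$, using only associativity and $(xy)^{-1} = y^{-1}x^{-1}$ in the group $G$; no commutativity is needed. Hence for every pair $(i,j)$ the condition $(g_ih)(g_jh)^{-1} = s$ is equivalent to $g_ig_j^{-1} = s$. So the index sets of matrix units spanning $R_s$ and $L_s$ coincide, and $R_s = L_s$ for every $s\in G$.

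There is no real obstacle. The only point requiring care is that the shift must be on the right: a left shift $hg_i$ would give the degree $hg_ig_j^{-1}h^{-1}$, a conjugate, which in general is a different grading. I would note this in passing to explain why the statement uses $\overline g\cdot h$.
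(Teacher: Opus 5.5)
Your proof is correct and follows the same route as the paper: the single computation $(g_ih)(g_jh)^{-1} = g_ihh^{-1}g_j^{-1} = g_ig_j^{-1}$ shows that each $E_{ij}$ receives the same degree under both tuples, so the spanning sets, and hence the components, coincide. The paper phrases this as two inclusions $L_s \subseteq R_s$ and $R_s \subseteq L_s$, while you compare the index sets directly, which is a slightly cleaner packaging of the same argument.
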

	
	\begin{proof}
		
		Let $E_{ij} \in L_s$. By the definition of an elementary grading, $L_s = L_{(g_ih)(g_jh)^{-1}}$. Then
		
		$$s = (g_ih)(g_jh)^{-1} = g_ihh^{-1}g_j^{-1} = g_ig_j = s.$$
		Consequently, $E_{ij} \in R_{g_ig_j^{-1}} = R_s$. Thus, $L_s \subset R_s$. The reverse inclusion is proved in a similar way.
	\end{proof}
	
	\begin{corollary}
		
		Let $L = M_n(\mathbb D)$ be an elementary $G$-graded algebra with respect to the tuple of elements $\overline{g} = \vtr{g}{n}$, then there exists a tuple of elements $\overline{h} = \vtr{h}{n} \in G^n$ such that $h_1 = 1_G$, and the elementary gradings of the algebra $L$ with respect to the tuples $\overline{g}$ и $\overline{h}$ and are coincide.
	\end{corollary}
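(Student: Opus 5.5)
The plan is to apply Proposition \ref{prop:shift} with a suitably chosen right shift. Given the tuple $\overline{g} = \vtr{g}{n}$, I take $h = g_1^{-1}$ and define
\[
\overline{h} = \overline{g}\cdot g_1^{-1} = (g_1g_1^{-1}, g_2g_1^{-1}, \ldots, g_ng_1^{-1}),
\]
that is, $h_i = g_ig_1^{-1}$ for $i = 1, \ldots, n$. Its first coordinate is $h_1 = g_1g_1^{-1} = 1_G$, as the statement requires.

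It remains to check that the two elementary gradings coincide. Let $L_s$ be the component of degree $s$ with respect to $\overline{g}$, and $R_s$ the component with respect to $\overline{h}$. Proposition \ref{prop:shift}, applied with the element $g_1^{-1}\in G$, gives $R_s = L_s$ for every $s \in G$, which is exactly the claim.

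For safety I would also verify the underlying computation directly, since it is the only real content. Both gradings are spanned by matrix units, so it suffices to check that each $E_{ij}$ has the same degree in both. Indeed,
\[
h_ih_j^{-1} = g_ig_1^{-1}\,(g_jg_1^{-1})^{-1} = g_ig_1^{-1}g_1g_j^{-1} = g_ig_j^{-1}.
\]
Hence, for each $s$, the spanning sets $\{E_{ij} \mid g_ig_j^{-1}=s\}$ and $\{E_{ij} \mid h_ih_j^{-1}=s\}$ are identical, and so $L_s = R_s$.

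There is no genuine obstacle. The only point needing care is that the shift must be multiplication on the right, since only then does it cancel in $g_ig_j^{-1}$; a left shift would instead conjugate the degrees when $G$ is non-abelian.
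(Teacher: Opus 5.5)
Your proof is correct and is exactly the intended argument: the paper gives this corollary no separate proof, treating it as an immediate consequence of Proposition~\ref{prop:shift} with the shift $h = g_1^{-1}$, which is what you use. Your direct check $h_ih_j^{-1} = g_ig_j^{-1}$ is also right, and it incidentally corrects a slip in the paper's proof of that proposition, where $g_ig_j$ should read $g_ig_j^{-1}$.
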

	
	In order to define an elementary grading on a matrix algebra, it is necessary to choose $n$ elements. The latter corollary shows that one can choose only $n-1$ elements and choose the grading so that $g_1 = 1$.
	
	\begin{remark}
		
		Let $L = M_n(\mathbb D)$ be an elementary $G$-graded algebra, then $E_{ii} \in L_1$ for all $i = 1, \ldots, n$. This follows directly from the definition, since $E_{ii} \in L_{g_ig_i^{-1}} = L_1$.
	\end{remark}

	The latter remark allows constructing a series of graded algebras which are not elementary graded. For example, if one requires the neutral homogeneous component to lie in the center of the algebra, i.e., $L_1 \subset Z(L)$, the grading on $L$ cannot be elementary. Such algebras are considered, for example, in \cite{Tu2grp}. Also, an algebra with this property is given as a counterexample showing that not all gradings are elementary in \cite{BahSeg}.

	\begin{corollary}\label{coro:badCenter}
		
		Let $L = M_n(\mathbb{D})$ be a $G$-graded algebra, with $L_1 \subseteq Z(L)$. Then $L$ is not an elementary graded algebra at $n \geqslant 2$.
	\end{corollary}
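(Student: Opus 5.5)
We argue by contradiction: suppose the grading on $L = M_n(\mathbb D)$ with $L_1 \subseteq Z(L)$ is elementary with respect to some tuple $\overline g = \vtr{g}{n}$. The preceding Remark then gives $E_{ii} \in L_1$ for every $i = 1, \ldots, n$. By hypothesis, each $E_{ii}$ therefore lies in the center $Z(L)$.

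It remains to see that $E_{11}$ is not central once $n \geqslant 2$. Take the matrix unit $E_{12}$, which exists because $n \geqslant 2$. We have
$$E_{11}E_{12} = E_{12}, \qquad E_{12}E_{11} = 0.$$
Since $E_{12} \neq 0$, the element $E_{11}$ does not commute with $E_{12}$. Hence $E_{11} \notin Z(L)$, which contradicts $E_{11} \in L_1 \subseteq Z(L)$. So the grading cannot be elementary.

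There is no genuine obstacle. The only points to check are that the Remark applies to any tuple whatsoever, and that the matrix units multiply in the usual way over a division ring. This holds because $1_{\mathbb D}$ is central, so $E_{ij}E_{kl} = \delta_{jk}E_{il}$.
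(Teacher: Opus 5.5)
Your proof is correct and follows the paper's argument: the Remark places every $E_{ii}$ in $L_1$, and a diagonal matrix unit cannot be central. The only difference is that you make the paper's ``obvious'' non-centrality step explicit, via $E_{11}E_{12} = E_{12} \neq 0 = E_{12}E_{11}$ for $n \geqslant 2$.
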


	\begin{proof}
		
		By the previous remark, $E_{ii} \in L_1$. It is obvious that $E_{ii} \not \in Z(L)$, which contradicts the assumption $L_1 \subseteq Z(L)$.
	\end{proof}

	\section{Algebras with a natural basis}\label{subsec:essentialBasis}
	
	In this section, we consider matrix algebras of a special form. Namely, graded matrix algebras that have a basis consisting of homogeneous matrix units. 
	Let us give a precise definition: a $G$-graded subalgebra $L \leqslant  M_n(\mathbb D)$ will be called a $G$-graded algebra with a natural basis, or simply an algebra with a natural basis, if the following conditions are holds:
	\begin{enumerate}
		
		\item $L = span\{E_{ij} \ | \ \text{for some } i, j\}$;
		\item Each $E_{ij} \in L$ are homogeneous.
	\end{enumerate}
	
	\begin{example}\label{example:upperTriangular}
		
		Consider the algebra of upper triangular matrices $L = T_n(\mathbb D) = span\{E_{ij} \ | \ 1 \leqslant i \leqslant j \leqslant n\}$ with a $\mathbb Z$-grading given by the following rule: 
		
		\[L_h = span \{E_{ij} \ | \ j - i = h, \ 1 \leqslant i \leqslant j \leqslant n\}.\]
		It is obvious that $L$ is the linear span of some matrix units, with all matrix units in $L$ being homogeneous, i.e. $L$ is a $\mathbb Z$-graded algebra with a natural basis. 
	\end{example}
	
	In this section, the symbol $L$ denotes a $G$-graded algebra with a natural basis. For each algebra $L$ with a natural basis we define two directed multigraphs $\gamma(L)$ and $\Gamma(L)$, such that $\gamma(L) \subseteq \Gamma(L)$. Let us begin by defining $\gamma(L)$. The set of vertices of $\gamma(L)$ is
	\[V(\gamma) = \{v_{ij} \ | \ E_{ij} \in L\}\]
	and the edges of this graph are defined as follows:
	\[E(\gamma) = \{(v_{ij}, v_{ik}) \ | \  v_{jk} \in V(\gamma)\} \cup \{(v_{ki}, v_{ji}) \ | \ v_{jk} \in V(\gamma)\}.\]
	Edges of the form $(v_{ij}, v_{ik})$ will be denoted by $e_{jk}^i$, and edges $(v_{ki}, v_{ji})$ will be denoted by $e^{jk}_i$ for brief. As a result, the graph $\gamma(L) = (V(\gamma), E(\gamma))$. Edges of the form $e_{jk}^i$ represents the multiplication of the matrix unit $E_{ij}$ by $E_{jk}$ on the right, since multiplying $E_{ij}$ on the right by $E_{jk}$ gives $E_{ik}$. The same analogy for multiplication on the left is also valid for edges of the form $e_i^{jk}$.
	
	\begin{figure}
		\centering
		\begin{tikzpicture}
			\node at (0, 6) {$v_{11}$};
			\node at (3, 6) {$v_{12}$};
			\node at (6, 6) {$v_{13}$};
			\node at (3, 3) {$v_{22}$};
			\node at (6, 3) {$v_{23}$};
			\node at (6, 0) {$v_{33}$};

			\node at (3.0, 7.7) {$e_{13}^1$};
			\draw[thick, -angle 60] 
			(0.2, 6.3) 
			.. controls (0.5, 6.8) and (1.5, 7.3) .. 
			(3.0, 7.3) 
			.. controls (4.5, 7.3) and (5.5, 6.8) .. 
			(5.8, 6.3);

			\node at (7.7, 3.0) {$e^{13}_1$};
			\draw[thick, -angle 60] 
			(6.3 , 0.2) 
			.. controls (6.8, 0.5) and (7.3, 1.5) .. 
			(7.3, 3.0) 
			.. controls (7.3, 4.5) and (6.8, 5.5) .. 
			(6.3, 5.8);

			\node at (1.5, 6.4) {$e_{12}^1$};
			\draw[thick, -angle 60] (0.5, 6.0) -- (2.5, 6.0);
			\node at (4.5, 6.4) {$e_{23}^1$};
			\draw[thick, -angle 60] (3.5, 6.0) -- (5.5, 6.0);
			\node at (4.5, 3.4) {$e_{23}^2$};
			\draw[thick, -angle 60] (3.5, 3.0) -- (5.5, 3.0);
			\node at (5.6, 1.5) {$e^{23}_3$};
			\draw[thick, -angle 60] (6.0, 0.5) -- (6.0, 2.5);
			\node at (5.6, 4.5) {$e^{12}_3$};
			\draw[thick, -angle 60] (6.0, 3.5) -- (6.0, 5.5);
			\node at (2.6, 4.5) {$e^{12}_2$};
			\draw[thick, -angle 60] (3.0, 3.5) -- (3.0, 5.5);

		\end{tikzpicture}
		\caption{Example $\gamma(T_3(\mathbb{D}))$.} \label{fig:example}
	\end{figure}
	
	Consider the set $V^T(\gamma) = \{v_{ji} \ | \ v_{ij} \in V(\gamma)\}$. Define the graph $\Gamma(L)$ the vertices of which are
	
	\[V(\Gamma) = \{v_{ij} \ | \ \exists v_{ik_1}, v_{k_1k_2}, \ldots v_{k_nj} \in V(\gamma)\cup V^T(\gamma)\}\]
	and the edges of this graph are
	\[E(\Gamma) = \{(v_{ij}, v_{ik}) \ | \  v_{jk} \in V(\Gamma)\} \cup \{(v_{ki}, v_{ji}) \ | \ v_{jk} \in V(\Gamma)\}.\]
	Using the analogy with the notation of the graph $\gamma(L)$, edges of the form $(v_{ij}, v_{ik})$ will be denoted by $e_{jk}^i$, and edges $(v_{ki}, v_{ji})$ will be denoted by $e^{jk}_i$.
	
	The edges and vertices are denoted by the same symbols in the graphs $\gamma(L)$ and $\Gamma(L)$. Further, it will be clear from the context which graph a given edge, or vertex belongs to. The example of the graph $\gamma(T_3(\mathbb D))$, where the grading on the algebra $T_3(\mathbb D)$ is given in the same way as in example \ref{example:upperTriangular}, is illustrated in the Figure \ref{fig:example}. The loops in the Figure \ref{fig:example} were omitted to obtain a compact image. However, it should be noted that each vertex $v_{ij}$ has two loops: $e^i_{jj}$ and $e_j^{ii}$ in the graph $\gamma(T_3(\mathbb D))$.
	
	Further, we will need some properties of the graphs $\gamma(L)$ and $\Gamma(L)$.

	\begin{proposition}\label{prop:graphProps}
		The following properties of the graphs holds:
		
		\begin{enumerate}

			\item $\gamma(L) \subseteq \Gamma(L)$;
			\item If $v_{ij} \in V(\Gamma)$, then $v_{ji} \in V(\Gamma)$;
			\item If $v_{ij}, v_{ik}\in V(\Gamma)$, then they lie in the same connected component of the graph $\Gamma(L)$;
			\item If$v_{ji}, v_{ki} \in V(\Gamma)$, then they lie in the same connected component of the graph $\Gamma(L)$;
			\item Each connected component of the graph $\Gamma(L)$ contains at least one vertex of the graph $\gamma(L)$.
		\end{enumerate}
	\end{proposition}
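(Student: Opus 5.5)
The five properties follow directly from the definitions of $V(\gamma)$, $V(\Gamma)$ and the two kinds of edges; we take them one at a time. Throughout we read the definition of $V(\Gamma)$ as: $v_{ij}\in V(\Gamma)$ iff there is a finite chain $i=k_0,k_1,\ldots,k_m=j$ with $m\geqslant 1$ and every $v_{k_{t-1}k_t}\in V(\gamma)\cup V^T(\gamma)$. In other words, $i$ and $j$ are joined by a walk in the undirected graph $H$ on $\{1,\ldots,n\}$ whose edges are the pairs $\{p,q\}$ with $E_{pq}\in L$.

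For property 1, if $v_{ij}\in V(\gamma)$ then the one-step chain $i\to j$ witnesses $v_{ij}\in V(\Gamma)$, so $V(\gamma)\subseteq V(\Gamma)$. The edge set of $\Gamma$ is given by the same formula as that of $\gamma$, with $V(\gamma)$ replaced by the larger set $V(\Gamma)$. Hence every edge $e^i_{jk}$ or $e^{jk}_i$ of $\gamma$ is also an edge of $\Gamma$. Strictly, for $e^i_{jk}=(v_{ij},v_{ik})$ we need $v_{ij},v_{ik}\in V(\Gamma)$ and $v_{jk}\in V(\Gamma)$, and in $\gamma$ these endpoints are vertices of $\gamma$. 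For property 2, a chain $i=k_0,\ldots,k_m=j$ reversed gives a chain from $j$ to $i$. Its links are $v_{k_tk_{t-1}}$, which lie in $V(\gamma)\cup V^T(\gamma)$ because that set is closed under transposition of indices.

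Properties 3 and 4 carry the content. If $v_{ij},v_{ik}\in V(\Gamma)$, property 2 gives $v_{ji}\in V(\Gamma)$. Concatenating the chain from $j$ to $i$ with the chain from $i$ to $k$ gives $v_{jk}\in V(\Gamma)$, so $V(\Gamma)$ is closed under composition of chains. Then $e^i_{jk}=(v_{ij},v_{ik})$ is by definition an edge of $\Gamma$, and the two vertices lie in one component; if $j=k$ there is nothing to prove. Property 4 is symmetric. From $v_{ji},v_{ki}\in V(\Gamma)$ we get $v_{ik}\in V(\Gamma)$ and then $v_{jk}\in V(\Gamma)$ by concatenation. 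The definition $e^{jk}_i=(v_{ki},v_{ji})$ for $v_{jk}\in V(\Gamma)$ then gives an edge joining them.

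Property 5 is where a little care is needed. Take $v_{ij}\in V(\Gamma)$ with a chain $i=k_0,k_1,\ldots,k_m=j$. The idea is to use 3 and 4 repeatedly to move within one component to a vertex whose indices are a single link of the chain. By property 3, $v_{ij}$ and $v_{ik_1}$ are in the same component, since both lie in $V(\Gamma)$ and share the first index; $v_{ik_1}$ is in $V(\Gamma)$ by the one-step chain. Now $v_{ik_1}=v_{k_0k_1}$, and either $v_{k_0k_1}\in V(\gamma)$, in which case we are done, or $v_{k_1k_0}\in V(\gamma)$. In the second case we need to connect $v_{k_0k_1}$ to its transpose. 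Write $a=k_0$, $b=k_1$. Property 3 connects $v_{ab}$ with $v_{aa}$, which is in $V(\Gamma)$ via the chain $a\to b\to a$. Property 4 connects $v_{aa}$ with $v_{ba}$, since both have second index $a$. So $v_{ab}$ and $v_{ba}\in V(\gamma)$ lie in one component. The main thing to verify is that the diagonal vertices $v_{aa}$ used here indeed belong to $V(\Gamma)$, which holds because chains of length at least two are allowed in the definition.
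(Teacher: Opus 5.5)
Your proof is correct and follows the paper's argument. Items 1--4 go the same way: reverse and concatenate chains, then use the edges $e^i_{jk}$ and $e^{jk}_i$. For item 5 the paper uses your two moves, phrased as a contradiction: an edge sharing the first index to reach a link of the chain, and the detour $v_{ij}\to v_{ii}\leftarrow v_{ji}$ through a diagonal vertex to get from $V^T(\gamma)$ to $V(\gamma)$.

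Your direct version is the cleaner one. You move to $v_{ik_1}$, which is always a link of the chain, whereas the paper's text moves to $v_{ik_n}$, which in general is not a link.
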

	
	\begin{proof}
		1) It is clear.
		
		\noindent 2) Let $v_{ij} \in \Gamma(L)$, then there exists a sequence of vertices
		\[v_{ik_1}, v_{k_1k_2}, \ldots, v_{k_nj} \in V(\gamma) \cup V^T(\gamma).\]
		Hence, there exists another sequence of vertices \[v_{jk_n}, v_{k_nk_{n-1}}, \ldots, v_{k_1i} \in V(\gamma) \cup V^T(\gamma),\]
		and therefore, $v_{ji} \in V(\Gamma)$. 
		
		\noindent 3) Let $v_{ij}, v_{ik} \in V(\Gamma)$, then by the previous item $v_{ji} \in V(\Gamma)$. Consequently, there exists a vertex $v_{jk} \in V(\Gamma)$, which means that there exists an edge $e_{jk}^i \in E(\Gamma)$, directed from $v_{ij}$ to $v_{ik}$ as well.
		
		\noindent 4) Similarly, there exists the edge $e^{jk}_i \in E(\Gamma)$ from $v_{ki}$ to $v_{ji}$.
		
		\noindent 5) Suppose that there is no vertex from $V(\gamma)$ in the connected component $\mathcal{O} \subseteq \Gamma(L)$, i.e., $\mathcal{O}\cap V(\gamma) = \emptyset$, then we prove that $\mathcal{O}\cap V^T(\gamma) = \emptyset$. Indeed, let $v_{ij} \in \mathcal{O}\cap V^T(\gamma)$, then by item $2$ $v_{ji} \in V(\Gamma)$. Consequently, there are vertices $v_{ij}$, $v_{ii}$ connected by the edge $e_{ji}^i$. Hence, $v_{ii} \in \mathcal{O}$. Since $v_{ij}$, $v_{ii} \in \mathcal{O}$, then $v_{ji} \in \mathcal{O}$ as well, since $v_{ji}$ is connected with $v_{ii}$ by the edge $e^{ij}_i$. It contradicts the fact that $v_{ji} \in V(\gamma)$.
		
		Now let $v_{ij} \in \mathcal{O}$ be an arbitrary vertex, then there exists a sequence of vertices $v_{ik_1}, v_{k_1k_2}, \ldots v_{k_nj} \in V(\gamma)\cup V^T(\gamma) \subseteq V(\Gamma)$. Without losing any generality, we may assume that the number of vertices in this sequence is greater than $1$, since, if the sequence consists of one vertex $v_{st} \in V(\gamma)\cup V^T(\gamma)$ then $v_{ts} \in V(\gamma)\cup V^T(\gamma)$, and one can consider the sequence $v_{st},v_{ts},v_{st}$ instead of $v_{st}$.
		Since $v_{ik_1}, v_{k_1k_2}, \ldots v_{k_nj} \in V(\gamma)\cup V^T(\gamma)$, then $v_{jk_n}, v_{k_nk_{n-1}}, \ldots, v_{k_2k_1} \in V(\gamma) \cup V^T(\gamma)$. It states that there are vertices $v_{jk_n}, v_{ik_1} \in V(\Gamma)$, and hence there is an edge $e_{jk_n}^i$ from $v_{ij}$ to $v_{ik_n}$. Consequently, $v_{ik_n} \in \mathcal{O}$, but  $v_{ik_n} \in V(\gamma) \cup V^T(\gamma)$. There is a contradiction.
	\end{proof}
	
	Now, we define a map that assigns elements of the group $G$ to the vertices and edges of the graphs $\gamma(L)$ and $\Gamma(L)$. Consider the graph $\gamma(L) = (V(\gamma), E(\gamma))$ and define $\varphi\colon \gamma(L) \to G$ by the following rule:
	
	\begin{enumerate}
		
		\item Let $v_{ij} \in V(\gamma)$, then $\varphi(v_{ij}) = g$, where $E_{ij} \in L_g$.
		
		\item Let $e_{jk}^i \in E(\gamma)$, then $E_{jk} \in L$ and $v_{jk} \in V(\gamma)$. Define $\varphi(e_{jk}^i) = \varphi(v_{jk})$. Similarly, $\varphi(e^{jk}_i) = \varphi(v_{jk})$ for the edges $e^{jk}_i \in E(\gamma)$.
	\end{enumerate}
	
	Note that the $\varphi$ is well defined, since $L$ is an algebra with a natural basis; in particular, all matrix units in $L$ are homogeneous. Let us establish some simple properties of this map.

	\begin{lemma}\label{lemma:phiProperties}
		
		The following properties holds:
		
		\begin{enumerate}
			
			\item If $v_{ii} \in V(\gamma)$, then $\varphi(v_{ii}) = 1 \in G$;
			\item If $v_{ij}, v_{jk} \in V(\gamma)$, then $\varphi(v_{ij}) \cdot \varphi(v_{jk}) = \varphi(v_{ik})$;
			\item If $v_{ij}, v_{ji} \in V(\gamma)$, then $\varphi(v_{ij}) = \varphi(v_{ji})^{-1}$.
		\end{enumerate}
		
	\end{lemma}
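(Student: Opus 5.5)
The plan is to use only the defining property $L_gL_h \subseteq L_{gh}$ of a graded algebra, together with the fact that the matrix units lying in $L$ are homogeneous and nonzero. Then each item reduces to comparing degrees of one nonzero homogeneous element. The key observation is that a nonzero element of $M_n(\mathbb D)$ lies in at most one homogeneous component, since $L=\bigoplus_g L_g$ is a direct sum. Therefore, if a nonzero element is shown to lie in both $L_a$ and $L_b$, we may conclude $a=b$.

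I would prove item 2 first, because the others follow from it. Let $v_{ij}, v_{jk}\in V(\gamma)$. Then $E_{ij}\in L_{\varphi(v_{ij})}$ and $E_{jk}\in L_{\varphi(v_{jk})}$. Their product is $E_{ij}E_{jk}=E_{ik}$, which lies in $L$ because $L$ is a subalgebra. It is homogeneous of degree $\varphi(v_{ik})$, since $L$ has a natural basis. On the other hand, $E_{ik}\in L_{\varphi(v_{ij})\varphi(v_{jk})}$ by the grading property. Since $E_{ik}\neq 0$ and the sum is direct, we get $\varphi(v_{ij})\varphi(v_{jk})=\varphi(v_{ik})$. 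This argument also shows that $v_{ik}\in V(\gamma)$, so the right-hand side is defined.

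Item 1 follows by applying item 2 with $i=j=k$. This gives $\varphi(v_{ii})^2=\varphi(v_{ii})$, and cancelling in the group $G$ yields $\varphi(v_{ii})=1$. For item 3, apply item 2 to the pair $v_{ij},v_{ji}$, noting that $E_{ij}E_{ji}=E_{ii}\in L$. This gives $\varphi(v_{ij})\varphi(v_{ji})=\varphi(v_{ii})=1$ by item 1, so $\varphi(v_{ij})=\varphi(v_{ji})^{-1}$.

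None of these steps is a real obstacle. The only point needing care is that the homogeneous degree of a nonzero element is well defined, which comes from the directness of the decomposition. It is also worth stating explicitly that the product of two matrix units in $L$ is again one of the spanning matrix units of $L$, and so is homogeneous.
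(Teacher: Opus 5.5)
Your proof is correct and matches the paper's argument: the paper likewise reads off the degree of $E_{ik}=E_{ij}E_{jk}$ for item 2, proves item 1 via $E_{ii}=E_{ii}E_{ii}$ (exactly your $i=j=k$ case of item 2), and derives item 3 from items 1 and 2. Your explicit remarks that a nonzero element has a unique degree because the sum is direct, and that $E_{ik}\in L$ because $L$ is a subalgebra, state points the paper leaves implicit.
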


	\begin{proof}
		
		1) Let $v_{ii} \in V(\gamma)$, then $E_{ii} \in L$; since all matrix units in $L$ are homogeneous, $E_{ii} \in L_g$ for some $g \in G$. It has to be proved that $g = 1$. Since a grading is given on the algebra $L$, we have $L_g \ni E_{ii} = E_{ii}E_{ii} \in L_{g^2}$. Consequently, $g = g^2$ and $g = 1$.
		
		\noindent 2) Let $v_{ij}, v_{jk} \in V(\gamma)$, then $E_{ij} \in L_g$, $E_{jk} \in L_h$ for some $g, h \in G$ and $E_{ik} = E_{ij}E_{jk} \in L_{gh}$. Consequently, $E_{ik} \in L_{gh}$ and $\varphi(v_{ik}) = gh = \varphi(v_{ij})\cdot \varphi(v_{jk})$.
		
		\noindent 3) Let $v_{ij}, v_{ji} \in V(\gamma)$, then by item $2$ we obtain that $\varphi(v_{ij})\cdot\varphi(v_{ji}) = \varphi(v_{ii})$. By the first item, $\varphi(v_{ii}) = 1 \in G$. Thus, $\varphi(v_{ij}) = \varphi(v_{ji})^{-1}$.
	\end{proof}
	
	Thanks to the item $3$ of the last lemma, we can correctly define a mapping $\varphi$ on the set $V^T(\gamma)$. Since $\gamma(L) \subseteq \Gamma(L)$, and each vertex from $V(\Gamma)$ was constructed by a chain of vertices from $V(\gamma) \cup V^T(\gamma)$, then by item $2$, map $\varphi$ can be correctly extended to the set $V(\Gamma)$. As the map $\varphi$ on the edges of the graph $\gamma(L)$ was given through the set of its vertices $V(\gamma)$, $\varphi$ can therefore be defined in the same way on the edges of the graph $\Gamma(L)$. Thus, we have defined mapping $\varphi\colon \Gamma(L) \to G$. Next, we need an action of the graph $\Gamma(L)$ edges on the set $G\times G$.
	
	Let us assign the edge $e_{jk}^i\in E(\Gamma)$ to the operators $r_{jk}, \overline r_{jk}\colon G \times G \to G \times G$ acting by the rule $(x, y)r_{jk} = (x, \varphi(v_{jk})^{-1}y)$, and the edge $e^{jk}_i$ to the operators $l_{jk}, \overline l_{jk}\colon G \times G \to G \times G$ acting by the rule $(x, y)l_{jk} = (\varphi(v_{jk})x, y)$, $(x, y)\overline{l}_{ij} = (\varphi(v_{jk})^{-1}x, y)$. We will apply the operators from left to right, i.e., if $\chi_1$, $\chi_2$ are two operators defined above, then $(x, y)\chi_1\chi_2$ means the successive action first of the operator $\chi_1$, and then of $\chi_2$ on the pair $(x, y) \in G \times G$, i.e. $(x, y)(\chi_1\chi_2) = ((x, y)\chi_1)\chi_2$.
	
	Let us introduce one more notation: tilde '$\sim$' over the operators $\tilde r$, $\tilde l$ denotes either the presence of a bar or its absence. Now, we need some properties of the operators $\tilde r \in \{r_{jk}, \overline r_{jk}\}$, $\tilde l \in \{l_{jk}, \overline l_{jk}\}$.

	\begin{lemma}\label{lemma:opersProperties}
		The following properties for the operators $\tilde r_{ij}$, $\tilde l_{ij}$ holds:
		\begin{enumerate}
			\item $\tilde r_{ij}\tilde l_{kl} = \tilde l_{kl}\tilde r_{ij}$;
			
			\item $\overline{r}_{ij} = r_{ji}$, $\overline{l}_{ij} = l_{ji}$;
			
			\item $r_{ij}r_{jk} = r_{ik}$, $l_{ji}l_{kj}=l_{ki}$.
			
		\end{enumerate}
	\end{lemma}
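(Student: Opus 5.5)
The plan is to verify each identity directly on an arbitrary pair $(x,y)\in G\times G$, using only the explicit formulas for the operators and the properties of the extended map $\varphi\colon\Gamma(L)\to G$. I read the definitions as follows: $r_{jk}$ and $\overline r_{jk}$ act only on the second coordinate, multiplying it on the left by $\varphi(v_{jk})^{-1}$ and $\varphi(v_{jk})$ respectively. The operators $l_{jk}$ and $\overline l_{jk}$ act only on the first coordinate, multiplying it on the left by $\varphi(v_{jk})$ and $\varphi(v_{jk})^{-1}$. The whole proof rests on two facts about the extension of $\varphi$ to $V(\Gamma)$, which I will establish first.

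\textbf{Fact (a).} For $v_{ij}\in V(\Gamma)$ we have $v_{ji}\in V(\Gamma)$ and $\varphi(v_{ji})=\varphi(v_{ij})^{-1}$. The membership is item 2 of Proposition~\ref{prop:graphProps}. For the value, I reverse the defining chain $v_{ik_1},\dots,v_{k_nj}$ to get $v_{jk_n},\dots,v_{k_1i}$. The value of $\varphi$ along the reversed chain is the inverse of the product along the original chain, because on $V(\gamma)\cup V^T(\gamma)$ we have $\varphi(v_{ts})=\varphi(v_{st})^{-1}$ by Lemma~\ref{lemma:phiProperties}, item 3, and by the definition of $\varphi$ on $V^T(\gamma)$.

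\textbf{Fact (b).} If $v_{ij},v_{jk}\in V(\Gamma)$, then $v_{ik}\in V(\Gamma)$ and $\varphi(v_{ik})=\varphi(v_{ij})\varphi(v_{jk})$. Concatenating a chain for $v_{ij}$ with a chain for $v_{jk}$ gives a chain from $i$ to $k$, so $v_{ik}\in V(\Gamma)$. Because the extension of $\varphi$ is assumed well defined, i.e.\ independent of the chain, computing $\varphi(v_{ik})$ along the concatenated chain gives the product.

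\textbf{Item 1.} Each $\tilde r_{ij}$ changes only the second coordinate and each $\tilde l_{kl}$ changes only the first. Hence both orders send $(x,y)$ to $(\varphi(v_{kl})^{\pm1}x,\ \varphi(v_{ij})^{\mp1}y)$, with the signs fixed by the bars, so the two compositions coincide.

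\textbf{Item 2.} By Fact (a), $(x,y)r_{ji}=(x,\varphi(v_{ji})^{-1}y)=(x,\varphi(v_{ij})y)=(x,y)\overline r_{ij}$. The identity $\overline l_{ij}=l_{ji}$ follows in the same way.

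\textbf{Item 3.} Operators are applied left to right, so
\[(x,y)r_{ij}r_{jk}=(x,\varphi(v_{jk})^{-1}\varphi(v_{ij})^{-1}y)=(x,(\varphi(v_{ij})\varphi(v_{jk}))^{-1}y)=(x,\varphi(v_{ik})^{-1}y)\]
by Fact (b). Similarly,
\[(x,y)l_{ji}l_{kj}=(\varphi(v_{kj})\varphi(v_{ji})x,y)=(\varphi(v_{ki})x,y),\]
again by Fact (b) applied to $v_{kj},v_{ji}$. The order of multiplication in $G$ is exactly right in both cases, which explains the index pattern $l_{ji}l_{kj}$ in the statement.

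The only delicate point is Fact (b): it depends on the extension of $\varphi$ to $V(\Gamma)$ being independent of the chosen chain, which the paper asserts via Lemma~\ref{lemma:phiProperties}, item 2. I will take that as given and not reprove it. Everything else is routine bookkeeping with the left-to-right action convention.
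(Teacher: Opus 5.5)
Your proof is correct and follows the paper's argument: a coordinatewise check on $(x,y)$. Item 2 reduces to $\varphi(v_{ji})=\varphi(v_{ij})^{-1}$ and item 3 to $\varphi(v_{ij})\varphi(v_{jk})=\varphi(v_{ik})$ on $V(\Gamma)$; the paper uses these properties silently, and you isolate them as Facts (a) and (b). You are right to flag chain-independence of the extension of $\varphi$ as the real content: the paper only asserts it, and it can fail, e.g. for the $\mathbb Z$-graded zero-product algebra $L=\mathrm{span}\{E_{12},E_{34},E_{32},E_{14}\}$ with $E_{14}$ in degree $1$ and the other units in degree $0$, where the chains $v_{14}$ and $v_{12},v_{23},v_{34}$ give $v_{14}$ the values $1$ and $0$, so the lemma is only meaningful under that well-definedness hypothesis.
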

	
	\begin{proof}
		
		1) Since the operator $\tilde r_{ij}$ acts on the right component of a pair from $G \times G$, and the operator $\tilde l_{ij}$ acts on the left one, they commute.  
		
		\noindent 2) By the definition of $\overline{r}_{ij}$
		\[(x, y)\overline{r}_{ij}=(x, \varphi(v_{ij})y) = (x, \varphi(v_{ji})^{-1}y) = (x, y)r_{ji}.\]
		
		\noindent 3) Consider the following expression
		\begin{align}
			(x, y)r_{ij}r_{jk} &= (x, \varphi(v_{jk})^{-1}\varphi(v_{ij})^{-1}y) \nonumber \\ 
			&= (x, \varphi(v_{kj})\varphi(v_{ji})y) \nonumber \\
			&= (x, \varphi(v_{ki})y) \nonumber \\ 
			&= (x, \varphi(v_{ik})^{-1}y) \nonumber \\ 
			&= (x, y)r_{ik}. \nonumber
		\end{align}
		
		\noindent The properties $2) - 3)$ for the operators $\overline{r}_{ij}$, $l_{ij}$, $\overline{l}_{ij}$ are proved analogously.
	\end{proof}
	
	Further, we will consider non-oriented paths in the graph $\Gamma(L)$. Let us fix the notation: if a path at some place goes along the edge $e_{jk}^i$ ($e^{jk}_i$ respectively), we will write $p = \ldots e_{jk}^i \ldots$ at the corresponding place ($p = \ldots e^{jk}_i \ldots$ respectively). If a path goes against the edge $e_{jk}^i$ ($e^{jk}_i$ respectively), we will write $p = \ldots \overline e_{jk}^i \ldots$ ($p = \ldots \overline e^{jk}_i \ldots$ respectively).

	Now let $p$ be an arbitrary non-oriented path in the graph $\Gamma(L)$. Define the action of the path $p$ on the set $G \times G$ by induction along the length of the path.
	Let $p$ be a path of length $1$. Define the value $(x, y)p$ by the following rule: 
	
	\[
	\begin{cases}
		(x,y)p = (x, y)r_{jk}, \text{ if } p = e_{jk}^i, \\
		(x,y)p = (x, y)l_{jk}, \text{ if } p = e^{jk}_i, \\
		(x,y)p = (x, y)\overline r_{jk}, \text{ if } p = \overline e_{jk}^i, \\
		(x,y)p = (x, y)\overline l_{jk}, \text{ if } p = \overline e^{jk}_i.
	\end{cases}
	\]
	
	Let $p = p_1p_2$, where $p_1$ is a path of nonzero length, and $p_2$ is a path of length $1$. Define the action of the path $p$ on pairs from $G \times G$ as the successive action by the paths $p_1$ and $p_2$, i.e., $(x, y)p = ((x, y)p_1)p_2$.
	Further, we identify a path with its action on $G \times G$. Now our goal is to show that two paths with the same start and end acts on the elements of $G \times G$ in a same way.
	
	\begin{lemma}\label{lemma:pathInvariancy}
		
		Let $v_{ij}$, $v_{kl}$ be two vertices in the graph $\Gamma(L)$, and let $p_1$, $p_2$ be two paths beginning at $v_{ij}$ and ending at $v_{kl}$. Then, $(x, y)p_1 = (x, y)p_2$ for any pair $(x, y) \in G \times G$.
	\end{lemma}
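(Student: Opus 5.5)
The plan is to find an explicit formula for the action of any path, depending only on its endpoints. Concretely, I will show that for every non-oriented path $p$ in $\Gamma(L)$ from $v_{ij}$ to $v_{kl}$ and every $(x,y)\in G\times G$,
\[(x,y)p = \bigl(\varphi(v_{ki})\,x,\ \varphi(v_{lj})\,y\bigr).\]
Here $\varphi$ is taken in its extension to $V(\Gamma)$. The right-hand side depends only on the endpoints, so the lemma follows at once.

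Before the induction I must check that the vertices $v_{ki}$ and $v_{lj}$ exist in $\Gamma(L)$, so that $\varphi$ is defined on them.
\begin{itemize}
\item Along an edge $e^i_{jk}$ (from $v_{ij}$ to $v_{ik}$) the first index is fixed. The second index changes from $j$ to $k$, and $v_{jk}\in V(\Gamma)$ by the definition of $E(\Gamma)$.
\item Along an edge $e^{jk}_i$ (from $v_{ki}$ to $v_{ji}$) the second index is fixed. The first index changes from $k$ to $j$, and $v_{jk}\in V(\Gamma)$.
\end{itemize}
Following the path therefore gives a chain of vertices of $\Gamma$ linking the row indices $i\to k$, and a similar chain linking the column indices $j\to l$. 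Reversed edges contribute the transposed vertices, which lie in $V(\Gamma)$ by Proposition \ref{prop:graphProps}, item 2. Since $V(\Gamma)$ is closed under concatenating chains (by definition of $V(\Gamma)$ via chains from $V(\gamma)\cup V^T(\gamma)$), we get $v_{ki},v_{lj}\in V(\Gamma)$. The identity $v_{ii}$ case is covered by $\varphi(v_{ii})=1$.

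The proof of the formula is by induction on the length of $p$, treating the four cases for the last edge.
\begin{itemize}
\item For the last edge $e^i_{jk}$ (from $v_{ij}$ to $v_{ik}$), the operator is $r_{jk}$, which multiplies the second coordinate on the left by $\varphi(v_{jk})^{-1}=\varphi(v_{kj})$. Hence $\varphi(v_{lj})y$ becomes $\varphi(v_{kj})\varphi(v_{lj})y$ in the suitably relabelled indices, and this must equal $\varphi(v_{\text{new},j})y$.
\item The other three cases ($\overline e^i_{jk}$, $e^{jk}_i$, $\overline e^{jk}_i$) are analogous. They use Lemma \ref{lemma:opersProperties}, items 2 and 3 (e.g.\ $r_{ij}r_{jk}=r_{ik}$, $\overline r_{ij}=r_{ji}$), together with Lemma \ref{lemma:phiProperties} for the extended $\varphi$.
\end{itemize}
Lemma \ref{lemma:opersProperties}, item 1, ensures that the row and column components evolve independently.

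The main obstacle is that Lemma \ref{lemma:phiProperties} is stated only on $V(\gamma)$, while the induction needs the cocycle identities on all of $V(\Gamma)$:
\[\varphi(v_{ab})\varphi(v_{bc})=\varphi(v_{ac}),\qquad \varphi(v_{ba})=\varphi(v_{ab})^{-1}.\]
These are precisely what makes the extension of $\varphi$ to $V(\Gamma)$ well defined. I would first establish them as follows.
\begin{itemize}
\item Define $\varphi(v_{ab})$ as the product along any chain of $V(\gamma)\cup V^T(\gamma)$ from $a$ to $b$.
\item Show this is independent of the chain by reducing to the fact that a closed chain has product $1$. This follows from item 2 of Lemma \ref{lemma:phiProperties} applied to products of matrix units in $L$: if a closed product of matrix units, each in $L$ or transposed, is nonzero, then it equals $E_{aa}$. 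One must be careful here, since transposed units are not in $L$. I expect this to need an argument that chains can be shortened using $E_{ab}E_{ba}=E_{aa}$-type relations, or else simply accepting the well-definedness asserted in the text just before the lemma.
\end{itemize}
Once the cocycle identities hold, multiplicativity and inversion on $V(\Gamma)$ are immediate, and the induction closes.
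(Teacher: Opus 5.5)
Your route is the paper's own. The paper moves every $\tilde l$ in front of every $\tilde r$ (item 1 of Lemma~\ref{lemma:opersProperties}) and telescopes each block with items 2--3 to $r_{jl}l_{ki}$, i.e.\ $(x,y)\mapsto(\varphi(v_{ki})x,\varphi(v_{jl})^{-1}y)$. That is your formula, and your induction on the last edge is the same computation. You also correctly identify the crux: the identities $\varphi(v_{ab})\varphi(v_{bc})=\varphi(v_{ac})$ and $\varphi(v_{ba})=\varphi(v_{ab})^{-1}$ on all of $V(\Gamma)$. The paper only asserts them (``by item 2, map $\varphi$ can be correctly extended''), and they are exactly what the proofs of items 2--3 of Lemma~\ref{lemma:opersProperties} rely on. The gap is that neither your proposed argument nor your fallback closes this, because closed chains in $V(\gamma)\cup V^T(\gamma)$ need not have $\varphi$-product $1$. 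Cancelling via $E_{ab}E_{ba}=E_{aa}$ only removes backtracks. A genuine cycle of indices runs through transposed units that are not in $L$, so it corresponds to no product in $L$, and Lemma~\ref{lemma:phiProperties} says nothing about it.

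For instance, take $L=\mathrm{span}\{E_{11},E_{22},E_{33},E_{44},E_{12},E_{13},E_{42},E_{43}\}\leqslant M_4(\mathbb D)$. Put the diagonal units in $L_1$, and let $E_{12}\in L_a$, $E_{13}\in L_b$, $E_{42}\in L_c$, $E_{43}\in L_d$ for arbitrary $a,b,c,d\in G$. Every product of two off-diagonal units of $L$ is $0$, so this is a $G$-graded algebra with a natural basis.

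Your identities would then force both $\varphi(v_{14})=\varphi(v_{12})\varphi(v_{24})=ac^{-1}$ and $\varphi(v_{14})=\varphi(v_{13})\varphi(v_{34})=bd^{-1}$. Concretely, the paths $e^1_{12}e^1_{24}$ and $e^1_{13}e^1_{34}$ from $v_{11}$ to $v_{14}$ act by $(x,y)\mapsto(x,ca^{-1}y)$ and $(x,y)\mapsto(x,db^{-1}y)$. This uses $\varphi$ only on $V(\gamma)\cup V^T(\gamma)$, where it is unambiguous. With $b=c=d=1\neq a$ the lemma itself fails, so accepting the text's well-definedness claim means accepting a false statement.

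Your induction is valid only under the extra hypothesis that every closed chain in $V(\gamma)\cup V^T(\gamma)$ has $\varphi$-product $1$. That hypothesis cannot be derived from the setup. It is equivalent to $L$ being a graded subalgebra of some elementary grading on $M_n(\mathbb D)$; here $g_1g_4^{-1}$ would have to equal both $ac^{-1}$ and $bd^{-1}$. The same example therefore also breaks Lemma~\ref{lemma:homgenousPut}.
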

	
	\begin{proof}    
		By the definition of the path action on $G \times G$, they can be written as a composition of operators: 
		\[p_1 = \chi_1\chi_2\ldots \chi_{k_1}, \]
		\[p_2 = \xi_1\xi_2\ldots \xi_{k_2}, \]
		
		\noindent where $\chi_r, \xi_s \in \{r_{ij}, l_{ij}, \overline{r}_{ij}, \overline{l}_{ij}\}$. The operator $r_{ij}$ ($l_{ji}$ respectively) is written instead of $\chi_r$, $\xi_s$ if the path goes along the edge $e_{ij}^m$ ($e^{ji}_m$ respectively), and the operator $\overline{r}_{ji}$ ( $\overline{l}_{ij}$ respectively) is written if the path goes against the edge $e_{ij}^m$ ( $e^{ij}_m$ respectively).
		
		Now, we show that these two compositions of operators are equal. To do this, we will bring them to a certain normal form. By the first item of Lemma \ref{lemma:opersProperties}, one can first act by the operators $\tilde l$, and then by the operators $\tilde r$:
		\[p_1 = 
		\tilde l_{i_1i_2}\tilde l_{i_3i_4}\ldots \tilde l_{i_{n_1 - 1}i_{n_1}} 
		\tilde r_{j_1j_2}\tilde r_{j_3j_4}\ldots \tilde r_{j_{m_1 - 1}j_{m_1}}, 
		\]
		\[p_2 = 
		\tilde l_{s_1s_2}\tilde l_{s_3s_4}\ldots \tilde l_{s_{n_2 - 1}s_{n_2}} 
		\tilde r_{q_1q_2}\tilde r_{q_3q_4}\ldots \tilde r_{q_{m_2 - 1}q_2}. 
		\]
		
		\noindent Thus, it is necessary and sufficient to show that the following two equations hold:
		
		\begin{equation}\label{l:equation}
			\tilde l_{i_1i_2}\tilde l_{i_3i_4}\ldots \tilde l_{i_{n_1 - 1}i_{n_1}} = 
			\tilde l_{s_1s_2}\tilde l_{s_3s_4}\ldots \tilde l_{s_{n_2 - 1}s_{n_2}},
		\end{equation}
		\begin{equation}\label{r:equation}
			\tilde r_{j_1j_2}\tilde r_{j_3j_4}\ldots \tilde r_{j_{m_1 - 1}j_{m_1}} = 
			\tilde r_{q_1q_2}\tilde r_{q_3q_4}\ldots \tilde r_{q_{m_2 - 1}q_2}. 
		\end{equation}
		
		Consider the composition of operators $\tilde r_{j_1j_2}\tilde r_{j_3j_4}\ldots \tilde r_{j_{m_1 - 1}j_{m_1}}$ in the left-hand side of equation \eqref{r:equation}. If there is only one operator $r_{j_1,j_2}$ in this composition, the index $j_1 = j$ and $j_2 = l$, since we consider the path $p_1$ of length $1$ from $v_{ij}$ to $v_{kl}$ and $r_{j_1j_2} = r_{jl}$. If there is only one operator $\overline r_{j_1j_2}$, according to the same considerations, it follows that $j_1 = l$, $j_2 = j$, and $\overline r_{j_1j_2} = \overline r_{lj}$, which is equal to $r_{jl}$ by item 2 of Lemma \ref{lemma:opersProperties}.
		
		Further, suppose that there are at least two operators in the composition $\tilde r_{j_1j_2}\tilde r_{j_3j_4}\ldots \tilde r_{j_{m_1 - 1}j_{m_1}}$. Consider the composition of the first two operators $\tilde r_{j_1j_2}\tilde r_{j_3j_4}$. If the first operator has the form $r_{j_1j_2}$, then $j_1 = j$ since we consider the path $p_1$ with its beginning at the vertex $v_{ij}$. Consequently, $r_{j_1j_2} = r_{jj_2}$. If the first operator is equal to $\overline r_{j_2j_1}$, then $j_1 = j$ and $\overline r_{j_2j_1} = r_{jj_2}$ by the same consideration. Applying the same reasoning to the operator $\tilde r_{j_3j_4}$, we obtain $j_3 = j_2$, and hence $\tilde r_{j_3j_4} = r_{j_2j_4}$. Thus, we have obtained the equality $\tilde r_{j_1j_2}\tilde r_{j_3j_4} = r_{jj_2}r_{j_2j_4}$.
		
		Extending these arguments by induction and applying item $2$ of Lemma \ref{lemma:opersProperties} the required number of times, we obtain that the composition of operators $\tilde r_{j_1j_2}\tilde r_{j_3j_4}\ldots \tilde r_{j_{m_1 - 1}j_{m_1}}$ is equal to the composition of operators $r_{jj_1}r_{j_1j_2}\ldots r_{j_{m_1 - 1}l}$, where all neighboring indices between the operators are equal. Applying now item $3$ of Lemma \ref{lemma:opersProperties}, we obtain the following equation:
		\[r_{jj_1}r_{j_1j_2}\ldots r_{j_{m_1 - 1}l} = r_{jl}.\]

		\noindent It is analogous to the composition on the right-hand side of equation \eqref{r:equation}, we obtain 
		\[\tilde r_{q_1q_2}\tilde r_{q_3q_4}\ldots \tilde r_{q_{m_2 - 1}q_2} = r_{jl},\]
		as for the compositions of the operators $\tilde l$ in equation \eqref{l:equation}, the following equations are holds
		\[\tilde l_{i_1i_2}\tilde l_{i_3i_4}\ldots \tilde l_{i_{n_1 - 1}i_{n_1}} = l_{ki},\]
		\[\tilde l_{s_1s_2}\tilde l_{s_3s_4}\ldots \tilde l_{s_{n_2 - 1}s_{n_2}} = l_{ki}.\]
		
		Thus, equations \eqref{l:equation}, \eqref{r:equation} are satisfied, and two arbitrary non-oriented paths $p_1$ and $p_2$ from the vertex $v_{ij}$ to the vertex $v_{kl}$ are equal (as operators) to the operator $r_{jl}l_{ik}$. Consequently, $p_1$ and $p_2$ are equal as operators.
	\end{proof}
	
	It was shown in the last lemma that the action of any path $p$ from $v_{ij}$ to $v_{kl}$ is equal to that of the operator $r_{jl}l_{ki}$, which is defined by the following rule:
	\[(x, y)p = (x, y)r_{jl}l_{ki} = (\varphi(v_{ki})x, \varphi(v_{jl})^{-1}y).\]
	We will need this form of a path action in the next lemma.
	Now we can prove the main lemma of this section, stating that any graded matrix algebra with a natural basis can be embedded into an elementary graded algebra.
	
	\begin{lemma}\label{lemma:homgenousPut}
		
		Let $L \leqslant M_n(\mathbb{D})$ be a $G$-graded algebra with a natural basis; then it is identically embedded into an elementary $G$-graded algebra $M_n(\mathbb{D})$ as a graded algebra.
	\end{lemma}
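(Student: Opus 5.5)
We need to find a tuple $\overline g = (g_1,\ldots,g_n)\in G^n$ such that every matrix unit $E_{ij}\in L$ satisfies $\varphi(v_{ij}) = g_ig_j^{-1}$. Then $L_h = L\cap M_n(\mathbb D)_h$ for the elementary grading given by $\overline g$, and the identity inclusion $L\hookrightarrow M_n(\mathbb D)$ is a homomorphism of graded algebras. By the definition of an algebra with a natural basis, $L$ is the span of its homogeneous matrix units, so it suffices to check the degree condition on these units. The plan is to construct $\overline g$ from the extended map $\varphi\colon\Gamma(L)\to G$.

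First, we organize the indices. Define an equivalence relation on $\{1,\ldots,n\}$ by $i\sim j$ iff $v_{ij}\in V(\Gamma)$, together with $i\sim i$ for every $i$. This relation is symmetric by item 2 of Proposition \ref{prop:graphProps}. It is transitive by the definition of $V(\Gamma)$: chains from $V(\gamma)\cup V^T(\gamma)$ can be concatenated. Indices that appear in no element of $V(\gamma)\cup V^T(\gamma)$ form singleton classes, and for these $g_i$ may be chosen arbitrarily, say $g_i=1$. In each nontrivial class $C$ we fix a representative $i_0$; note that $v_{i_0i_0}\in V(\Gamma)$, since $v_{i_0j},v_{ji_0}$ form a chain. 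We then set
\[
g_i = \varphi(v_{ii_0}) \quad \text{for } i\in C,
\]
so in particular $g_{i_0}=\varphi(v_{i_0i_0})=1$.

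Second, we verify the degree condition. Take $E_{ij}\in L$. Then $i\sim j$, both lie in the same class $C$, and
\[
g_ig_j^{-1} = \varphi(v_{ii_0})\varphi(v_{ji_0})^{-1} = \varphi(v_{ii_0})\varphi(v_{i_0j}) = \varphi(v_{ij}),
\]
which is the degree of $E_{ij}$. This computation uses the inverse and multiplicativity rules of Lemma \ref{lemma:phiProperties}, transported to $V(\Gamma)$.

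The main obstacle is that the extension of $\varphi$ to $V(\Gamma)$ is only asserted in the text to be ``correct''. We must make sure that $\varphi(v_{ij})$ does not depend on the chain chosen from $V(\gamma)\cup V^T(\gamma)$, and that items 2 and 3 of Lemma \ref{lemma:phiProperties} continue to hold on $V(\Gamma)$.

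The intended tool is Lemma \ref{lemma:pathInvariancy} and the remark following it. A path from $v_{ij}$ to $v_{kl}$ acts on $G\times G$ as $(x,y)\mapsto(\varphi(v_{ki})x,\varphi(v_{jl})^{-1}y)$, independently of the path. Using item 5 of Proposition \ref{prop:graphProps}, every component contains a vertex of $\gamma(L)$ with a genuine degree. I would compare two chains for $v_{ij}$ by viewing them as two paths in $\Gamma(L)$ from such a base vertex and applying path invariance, which forces the two chains to give the same value.

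If this becomes circular, the fallback is a direct argument. Take any closed chain $v_{i k_1},v_{k_1k_2},\ldots,v_{k_m i}$ of elements of $V(\gamma)\cup V^T(\gamma)$ and show by induction that the product of the $\varphi$-values along it is $1$. In the inductive step, consecutive genuine units $E_{ab}E_{bc}=E_{ac}\in L$ multiply according to Lemma \ref{lemma:phiProperties}, and transposed units cancel via item 3 of that lemma. The genuinely hard case is a pattern such as $E_{ab}$ followed by $E_{cb}^T$ with $E_{ab}E_{bc}$ possibly not in $L$. There I would reduce using the idempotents: $E_{bb}=E_{ba}E_{ab}$ belongs to $L$ whenever $E_{ba}\in L$. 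Otherwise, the cocycle identity would be derived by inserting paths through a $\gamma$-vertex in the same component, exactly as in the proof of item 5 of Proposition \ref{prop:graphProps}.
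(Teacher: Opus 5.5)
\textbf{There is a genuine gap, and it cannot be closed: the lemma as stated is false.} You located the right spot. Your choice $g_i=\varphi(v_{ii_0})$ works exactly when every closed chain in $V(\gamma)\cup V^T(\gamma)$ has $\varphi$-product $1$. That condition is also necessary: if a suitable tuple $\overline g$ exists, every such product telescopes to $g_ig_i^{-1}=1$. But it does not follow from $L$ being a graded subalgebra with a natural basis. Take $n=4$, any $G$ with an element $g\neq 1$, and
\[
L=\mathrm{span}\{E_{11},E_{22},E_{33},E_{44},E_{13},E_{14},E_{23},E_{24}\},
\]
with $E_{24}\in L_g$ and all other listed units in $L_1$. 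Every nonzero product of two listed units has the form $E_{ii}E_{ij}=E_{ij}$ or $E_{ij}E_{jj}=E_{ij}$, so this is a $G$-graded algebra with a natural basis. Now suppose a tuple puts $E_{13},E_{14},E_{23}$ in degree $1$. Then $g_1=g_3$, $g_1=g_4$ and $g_2=g_3$, hence $g_2=g_4$, and $E_{24}$ would have degree $1\neq g$. So no elementary grading of $M_4(\mathbb D)$ contains $L$ as a graded subalgebra.

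In your language, the closed chain $v_{13},v_{32},v_{24},v_{41}$ has $\varphi$-product $g$. Equivalently, $v_{12}$ receives the value $1$ via $v_{13},v_{32}$ and the value $g^{-1}$ via $v_{14},v_{42}$. This is precisely your ``hard case'': $E_{13}$ is followed by $E_{23}^T$ with $E_{13}E_{32}=E_{12}\notin L$, and $E_{31}\notin L$ even though all $E_{ii}\in L$. No insertion of paths through $\gamma$-vertices can rescue an identity that fails.

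Your primary route is, as you suspected, circular. Lemma~\ref{lemma:pathInvariancy} rests on item 3 of Lemma~\ref{lemma:opersProperties}, whose proof uses $\varphi(v_{kj})\varphi(v_{ji})=\varphi(v_{ki})$ for vertices of $\Gamma(L)$. That is exactly the multiplicativity of the extended $\varphi$ that is in question; in the example above the operators $r_{jk}$ are not even well defined.

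The paper's own proof has the same gap. It asserts that item 2 of Lemma~\ref{lemma:phiProperties} lets $\varphi$ be ``correctly extended'' to $V(\Gamma)$. But that item only controls products of units lying in $L$, not chains mixing $V(\gamma)$ and $V^T(\gamma)$. Apart from bookkeeping (your base-point choice versus the paper's propagation of pairs along paths), your argument and the paper's reduce to the same unproved cocycle condition.

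Your proof does become complete under an extra hypothesis that forces this condition, for example if $E_{ij}\in L$ implies $E_{ji}\in L$. Then $V^T(\gamma)=V(\gamma)=V(\Gamma)$, every chain multiplies inside $L$, and $\varphi$ is the genuine degree. In that setting $g_i=\varphi(v_{ii_0})$ gives a shorter argument than the paper's.
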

	
	\begin{proof}
		
		To prove the lemma, we will construct an elementary grading on $M_n(\mathbb D)$ so that it contains $L$ as a graded subalgebra. To do so, it is necessary to choose a tuple of elements  $(g_1, \ldots, g_n) \in G^n$ such that, if $E_{ij} \in L_g$, then $g = g_ig^{-1}_j$.
		
		Let us consider the connected components in the graph $\Gamma(L)$. Since the graph is finite, the number of its connected components is finite as well. Introduce numbering from $1$ to $k$ on the set of connected components, where $k$ is the number of connected components, i.e., $\displaystyle\Gamma(L) = \bigsqcup_{i=1}^k \mathcal{O}_i$ and consider the first connected component $\mathcal{O}_1$. By Proposition \ref{prop:graphProps} there exists at least one vertex from $V(\gamma)$ in $\mathcal{O}_1$. Consider an arbitrary vertex $v_{ij} \in V(\gamma) \cap \mathcal{O}_1$ and the element $\varphi(v_{ij}) = g$. Let us begin fixing the elements defining the elementary grading. Fix the elements $g_i = 1$, $g_j = g^{-1}$ and consider the pair $(g_i, g_j) = (1, g^{-1}) \in G \times G$.
		
		Let $v_{kl} \in \mathcal{O}_1$. Consider an arbitrary non-oriented path $p$ from $v_{ij}$ to $v_{kl}$. Applying the path $p$ to the pair $(1, g^{-1})$, we obtain $(1, g^{-1})p$. Assign $g_k = \pi_1((1, g^{-1})p), g_l = \pi_2((1, g^{-1})p)$. By Lemma \ref{lemma:pathInvariancy}, the path action on elements $G \times G$ is invariant on the choice of the path $p$, but depends only on the initial and terminal vertices. Thus, this definition of $g_k$ and $g_l$ does not depend on the choice of the path $p$.
		
		We prove that $g_kg_l^{-1} = \varphi(v_{kl})$ by induction on the path length. If the path length is zero, the terminal vertex is $v_{ij}$. We fixed $g_i = 1$, $g_j = g^{-1} = \varphi(v_{ij})^{-1}$. So, $g_ig_j^{-1} = \varphi(v_{ij})$. Next, suppose that $g_{k_0}g_{l_0}^{-1} = \varphi(v_{k_0l_0})$ holds for all paths of length $m-1$ from the vertex $v_{ij}$ to the vertex $v_{k_0l_0}$. Choose a path $p$ of length $m$ from the vertex $v_{ij}$ to $v_{kl}$ and consider its subpath $p_0$ of length $m-1$ from $v_{ij}$ to $v_{k_0l_0}$. These two paths differ by one last edge. Thus, one of the following four cases is possible:
		
		\begin{enumerate}
			
			\item $p = p_0r_{l_0l}$: the terminal vertex is $v_{kl_0}$;
			\item $p = p_0\overline{r}_{ll_0}$: the terminal vertex is $v_{kl_0}$;
			\item $p = p_0l_{ll_0}$: the terminal vertex is $v_{k_0l}$;
			\item $p = p_0\overline{l}_{l_0l}$: the terminal vertex is $v_{k_0l}$.
		\end{enumerate}
		
		Consider the first case. By the induction hypothesis, $g_kg_{l_0}^{-1} = \varphi(v_{kl_0})$. Acting the pair $(g_k, g_{l_0})$ by the operator $r_{l_0l}$, we obtain
		
		\[(g_k, g_{l_0})r_{l_0l} = (g_k, \varphi(v_{l_0l})^{-1}g_{l_0}) \Rightarrow g_l = \varphi(v_{l_0l})^{-1}g_{l_0}.\]
		Consider the following chain of equations:
		\[g_kg_l^{-1} = g_kg_{l_0}^{-1}\varphi(v_{l_0l}) = \varphi(v_{kl_0})\varphi(v_{l_0l}) = \varphi(v_{kl}).\]
		
		\noindent The assertion in the first case is proved. The remaining cases are proved analogously.
		Continuing to do so, we can define the element $g_k$ by constructing a path to vertices of the form $v_{1k}, v_{2k}, \ldots, v_{nk}$ or to vertices of the form $v_{k1}, v_{k2}, \ldots, v_{kn}$ that lie in $\mathcal{O}_1$. Since a path from $v_{ij}$ to $v_{sk}$ acts by the operator $r_{jk}l_{si}$, the definition of $g_k$ does not depend on $s$. The same is true for vertices of the form $v_{ks}$. It remains to show that the definition of $g_k$ does not depend on the type of vertex $v_{ks_1}$ or $v_{s_2k}$. It is so whether we defined $g_k$ through a vertex of the form $v_{ks_1}$, and $h_k$ through a vertex of the form $v_{s_2k}$. So, by what was proved earlier, $g_kh_k^{-1} = \varphi(v_{kk}) = 1$, and hence $h_k = g_k$, so the definition of the elements $g_k$ is correct within one connected component.
		
		We proceed analogously with the remaining connected components. We will show that the correctness of defining the elements $g_k$ is not violated when considering the remaining connected components. By Proposition $\ref{prop:graphProps}$, if $v_{ij}, v_{ik} \in V(\Gamma)$, then they are in one connected component, just as $v_{jk}, v_{ik} \in V(\Gamma)$ are in one connected component. Thus, if we defined $g_k$ inside one connected component, we cannot define $g_k$ inside another connected component, since vertices of the form $v_{ks_1}$, $v_{s_2k}$ are in one connected component.

		We have constructed a set of elements $g_i \in G$, but maybe $g_k$ is not defined for some $i \in \{1, 2, \ldots , n\}$. But, in order to define an elementary grading, we need to fix a tuple $(g_1, \ldots , g_n)$ for all $i = 1, 2, \ldots, n$. So, if $g_k$ was not defined by the algorithm described above, define $g_k = 1 \in G$. Thus, we have defined the tuple $(g_1, \ldots , g_n)$ for each $i = 1, 2, \ldots , n$.
		
		Now let us consider the algebra $M_n(\mathbb{D})$ elementary graded by this tuple, $M_n(\mathbb D) = R = \bigoplus_{g\in G}R_g$. Recall that since $L$ is an algebra with a natural basis, which means by definition that its basis is a set of matrix units. Define the homomorphism $\tau\colon L \to R$ on the basis elements: $\tau\colon E_{ij} \mapsto E_{ij}$. It is obvious that $\tau$ is an injective.
		
		Let $E_{ij} \in L_g$. Consequently, $g = \varphi(v_{ij}) = g_ig^{-1}_j$, and hence, the matrix unit $E_{ij} \in R_g$ in the elementary graded algebra $R = M_n(\mathbb{D}) = \bigoplus_{g \in G}R_g$. Thus, $\tau(L_g) \subseteq R_g$. We have obtained that $\tau$ is a homomorphism of $G$-graded algebras, and since it is injective, it is an embedding of graded algebras.
	\end{proof}
	
	The previous lemma demonstrates an algorithm for constructing the tuple $\overline g = (g_1, \ldots , g_n)$. However, this tuple is not the only one that the elementary graded algebra with respect to the tuple $\overline g$ contains $L$ as a graded subalgebra. By Proposition \ref{prop:shift}, the tuple $(g_1h, \ldots, g_nh)$ defines the same elementary grading on $M_n(\mathbb D)$ for an arbitrary element $h \in G$.
	
	Recall that at the beginning of the algorithm, the initial vertex $v_{ij} \in \gamma(L)$ was considered, and it was assumed that $g_i = 1$, $g_j = g^{-1}$, where $E_{ij} \in L_g$. Accordingly, in order to obtain the tuple $(g_1h, \ldots, g_nh)$ as the result of the algorithm to be one, it should initially be assumed that $g_i = h$, $g_j = (gh)^{-1}$.

	\section{Embedding in a general case}\label{sect:commonCase}
	
	Now our aim is to show that an arbitrary graded matrix algebra over a division ring is embedded into an elementary graded algebra. The embedding will be constructed into an algebra graded by a quotient group of the group $G$.
	
	Let us recall that the classical homomorphism of graded algebras mentioned in Section \ref{Prel} is a map between algebras graded by the same group. In order to perform embedding from a graded algebra into an algebra graded by another group, we need the following definition.
	
	\begin{definition}\label{def:gradedHom}
		
		Let $G$ and $H$ be groups, $A = \displaystyle\bigoplus_{g\in G}A_g$, and $B = \displaystyle\bigoplus_{h \in H}B_h$ be algebras graded by $G$ and $H$, respectively. We call a homomorphism of algebras $\varphi\colon A \to B$ a homomorphism of a G-algebra into an H-algebra if
		
		\begin{enumerate}
			
			\item For each $g \in G$ there exists $h\in H$ such that $\varphi (A_g) \subseteq B_h$;
			\item If $\varphi (A_{g_1}) \subseteq B_{h_1}$, $\varphi (A_{g_2}) \subseteq B_{h_2}$, then $\varphi (A_{g_1g_2}) \subseteq B_{h_1h_2}$.			
		\end{enumerate}
	\end{definition}
	
	\begin{proposition}
		
		Definition \ref{def:gradedHom} generalizes the classical definition of the homomorphism of graded algebras, i.e., the homomorphism of $G$-graded algebras $A$ and $B$ satisfies Definition \ref{def:gradedHom}.
	\end{proposition}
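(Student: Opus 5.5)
We take $G = H$ and let $\varphi\colon A \to B$ be a homomorphism of $G$-graded algebras in the classical sense of Section \ref{Prel}, so $\varphi(A_g) \subseteq B_g$ for every $g \in G$. We check the two conditions of Definition \ref{def:gradedHom} directly.

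For condition 1, given $g \in G$ we take $h = g$. The classical hypothesis then gives $\varphi(A_g) \subseteq B_h$ immediately.

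Condition 2 is the only place that needs care, because the element $h$ in condition 1 need not be unique. If $\varphi(A_g) = 0$, then $\varphi(A_g) \subseteq B_h$ holds for every $h \in H$. So from $\varphi(A_{g_1}) \subseteq B_{h_1}$ and $\varphi(A_{g_2}) \subseteq B_{h_2}$ we cannot conclude $h_i = g_i$. We handle this by cases.

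\emph{Case 1: both images are nonzero.} Since $B = \bigoplus_h B_h$ is a direct sum, $B_{h} \cap B_{g} = 0$ whenever $h \neq g$. Suppose $\varphi(A_{g_i}) \neq 0$ for $i = 1, 2$. We have $\varphi(A_{g_i}) \subseteq B_{g_i} \cap B_{h_i}$, and this intersection is nonzero, so $h_i = g_i$. Hence $\varphi(A_{g_1g_2}) \subseteq B_{g_1g_2} = B_{h_1h_2}$, as required.

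\emph{Case 2: some image is zero.} Suppose, say, $\varphi(A_{g_1}) = 0$. Then $h_1$ is arbitrary, and the required inclusion $\varphi(A_{g_1g_2}) \subseteq B_{h_1h_2}$ can fail whenever $\varphi(A_{g_1g_2}) \neq 0$.

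This degenerate case is the main obstacle, and the statement only holds once it is dealt with. The plan is to read condition 2 as referring to the canonical choice of $h$ when the image is nonzero. One way to make this precise is to fix the convention $h_i = g_i$, which is the classical assignment. Under that convention, condition 2 follows from $\varphi(A_{g_1g_2}) \subseteq B_{g_1g_2}$ with no case split at all. The proof should state this convention explicitly rather than hide it.
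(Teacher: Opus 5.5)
Your proposal is correct and, once the convention $h_i = g_i$ is adopted, it is exactly the paper's argument: the paper takes $h = g$ in condition 1 and checks condition 2 simply as $\varphi(A_{g_1g_2}) \subseteq B_{g_1g_2}$. Your remark that condition 2 fails when the $h_i$ are allowed to be arbitrary and $\varphi(A_{g_1}) = 0$ is also accurate: the identity map of $\mathbb{F}$, trivially graded by $\mathbb Z_2=\{1,t\}$, with $g_1=g_2=t$, $h_1=1$, $h_2=t$, already breaks the literal condition, so the paper relies on this convention without stating it, and your Case 1 shows that the alternative repair, restricting condition 2 to nonzero images, also works.
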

	
	\begin{proof}
		
		Let $\displaystyle A = \bigoplus_{g \in G}A_g, B = \bigoplus_{g \in G}B_g$ and let $\varphi\colon A \to B$ be the homomorphism such that $\varphi(A_g) \subseteq B_g$ for all $g \in G$. It is obvious that the first item of definition \ref{def:gradedHom} is satisfied. Moreover, we have $\varphi(A_{g_1})\subseteq B_{g_1}$, $\varphi(A_{g_2})\subseteq B_{g_2}$, and $\varphi(A_{g_1g_2})\subseteq B_{g_1g_2}$ for an arbitrary pair of elements $g_1, g_2 \in G$, which corresponds to the second item of definition \ref{def:gradedHom}.
	\end{proof}
	
	To construct the required quotient group, we need to introduce some more definitions. The support of a matrix $A = (a_{ij}) \in M_n(\mathbb D)$ is the set of pairs $suppA = \{ (i, j) \ | \ a_{ij} \neq 0\} \subseteq \mathbb{N}^2_+$, where $\mathbb N_+$ denotes the positive natural numbers. The support of nonempty set of matrices $S \subseteq M_n(\mathbb D)$ is the union of the supports of all its matrices:
	\[suppS = \bigcup_{A \in S} suppA.\]

	\begin{example}
		
		The support of the matrix unit $E_{ij} \in M_n(\mathbb D)$ is equal to $\{(i, j)\}$.
	\end{example}
	
	\begin{example}
		
		Let $A = \begin{pmatrix} 3 & 5 \\ 0 & 0 \end{pmatrix} \in M_2(\mathbb{R})$, then $supp(A) = \{(1, 1), (1, 2)\}$.
	\end{example}
	
	Also, we will need the product of pairs $\cdot : \mathbb{N}^2_+ \to \{0\} \cup \mathbb{N}^2_+$, which corresponds to the product of matrix units. Define it as follows:

	\begin{equation}
		\begin{cases}
			(i,j)\cdot (k,l) = (i, l)\text{, if } j = k,\\
			(i,j)\cdot (k,l) = 0\text{, otherwise.}
		\end{cases}
	\end{equation}
	
	Remark. The symbol zero '$0$' in the latter definition should be understood as a formal symbol introduced only so as to define the product of any pair. 
	
	Let us define the product of supports for two matrices $A, B \in M_n(\mathbb D)$ by the following rule:
	\[suppA\cdot suppB = \{(i, j) \cdot (k, l) \ | \ (i, j) \in suppA, (k, l) \in suppB, (i, j)\cdot (k, l) \neq 0\}.\]
	
	Further, the product of the supports of two matrix $S_1, S_2 \subseteq M_n(\mathbb{D})$ sets is the union of all pairwise products of the matrix supports in the given sets 
	\[suppS_1\cdot suppS_2 = \bigcup_{A\in S_1, B \in S_2} suppA\cdot suppB.\]
	The product of the matrix support $A \in M_n(\mathbb{D})$ and the set support $S \subseteq M_n(\mathbb D)$ is the product of the supports of the sets $\{A\}$ and $S$.
	
	Let us establish some properties of matrix supports and their products.
	
	\begin{proposition}\label{prop:suppSubset}
		
		The support of the matrix product is embedded into the product of their supports, i.e., $suppAB \subseteq suppA\cdot suppB$.
	\end{proposition}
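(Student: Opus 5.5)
The plan is to argue entry by entry, straight from the definition of matrix multiplication over $\mathbb D$. Take any pair $(i,l) \in supp\,AB$. By definition of the support, the $(i,l)$ entry of $AB$ is nonzero:
\[(AB)_{il} = \sum_{j=1}^n a_{ij}b_{jl} \neq 0.\]
A finite sum in $\mathbb D$ can only be nonzero if at least one summand is nonzero. So we may fix an index $j$ with $a_{ij}b_{jl} \neq 0$.

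Since $\mathbb D$ is a division ring, it has no zero divisors. Hence $a_{ij}b_{jl}\neq 0$ forces both $a_{ij}\neq 0$ and $b_{jl}\neq 0$. (Strictly, this direction only needs $0\cdot x = x\cdot 0 = 0$.) This gives $(i,j) \in supp\,A$ and $(j,l) \in supp\,B$.

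By the definition of the product of pairs, $(i,j)\cdot(j,l) = (i,l) \neq 0$. Therefore $(i,l)$ lies in $supp\,A\cdot supp\,B$ as defined above, and since $(i,l)$ was arbitrary, $supp\,AB \subseteq supp\,A\cdot supp\,B$.

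There is no real obstacle here. The only point requiring care is the first step, extracting a single nonzero summand from a nonzero sum. The inclusion can be strict because of cancellation in the sum, which is why the statement asserts only an inclusion and not an equality.
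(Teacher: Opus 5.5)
Your proof is correct and matches the paper's argument. The paper isolates the $(i,j)$ entry as $E_{ii}ABE_{jj} = \bigl(\sum_{l} a_{il}b_{lj}\bigr)E_{ij}$ and then picks a nonzero summand $a_{il_1}b_{l_1j}$, exactly as you do. Your remark that this step needs only $0\cdot x = x\cdot 0 = 0$, not the absence of zero divisors, is also right.
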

	
	\begin{proof} 
		
		Let $A, B \in M_n(\mathbb D)$ and $(i, j) \in suppAB$; then, by the definition of support,
		\begin{align}
			0 \neq E_{ii}ABE_{jj} &= E_{ii}(\sum a_{kl}E_{kl})(\sum b_{rs}E_{rs})E_{jj} \nonumber\\
			&=(\sum a_{il}E_{il})(\sum b_{rj}E_{rj}) \nonumber\\ 
			&= (\sum a_{il}b_{lj})E_{ij} \neq 0. \nonumber
		\end{align}
		Due to the fact that the latter expression is not equal to zero, there are elements $a_{il_1}$ and $b_{l_1j}$ which differ from zero. Consequently, $(i, l_1) \in suppA$, $(l_1, j) \in suppB$ and $suppA \cdot suppB \ni (i, l_1) \cdot (l_1, j) = (i, j).$.
		
		The analogous statement is true if $S_1$ and $S_2$ are sets of matrices. In this case, embedding $supp(S_1S_2) \subseteq suppS_1 \cdot suppS_2$ is hold.
	\end{proof}
	
	The following example shows that the product of the matrix supports is not necessarily equal to the support of the matrix product.
	
	\begin{example}\label{example:suppCounterexample}
		
		Let $A = E_{11} - E_{12}$, $B = E_{11} + E_{21} \in M_2(\mathbb D)$; then $suppA = \{(1, 1), (1, 2)\}$, $suppB = \{(1, 1), (2, 1)\}$. Using direct computations, one can check that $suppA\cdot suppB = \{(1, 1)\}$. On the other hand, $AB = 0$. Consequently, $suppAB = \emptyset$.
	\end{example}
	
	Further, we will need a certain inverse of Proposition \ref{prop:suppSubset} for sets of matrices consisting of matrix units.
	
	\begin{proposition}\label{prop:multSuppUnit}
		
		Let $S_1$ and $S_2$ be sets consisting of matrix units; then
		\begin{enumerate}
			
			\item $S_1S_2$ is a set consisting of matrix units and, possibly, the zero matrix;
			\item $S_1 \subseteq S_2$ if and only if $suppS_1 \subseteq suppS_2$;
			\item $supp(S_1S_2) = suppS_1\cdot suppS_2$.
		\end{enumerate}
	\end{proposition}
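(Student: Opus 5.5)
The plan is to reduce everything to the elementary identity $E_{ij}E_{kl} = \delta_{jk}E_{il}$ together with the observation that the support of a single matrix unit is a singleton: by the example above, $supp E_{ij} = \{(i,j)\}$. Conversely, a matrix unit is determined by its support. The three items will be proved in order, each using only this dictionary between matrix units and pairs in $\mathbb{N}_+^2$.

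For item 1, I take an arbitrary product $AB$ with $A = E_{ij}\in S_1$ and $B = E_{kl}\in S_2$. By the multiplication rule for matrix units, $AB = E_{il}$ if $j = k$ and $AB = 0$ otherwise. Hence every element of $S_1S_2$ (understood as the set of such products) is a matrix unit or the zero matrix.

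For item 2, the forward direction is immediate from the definition of the support of a set as a union: if $S_1\subseteq S_2$, then $suppS_1 = \bigcup_{A\in S_1} suppA \subseteq \bigcup_{A\in S_2} suppA = suppS_2$. For the converse, let $E_{ij}\in S_1$. Then $(i,j)\in suppS_1\subseteq suppS_2$, so $(i,j)\in suppE_{kl}$ for some $E_{kl}\in S_2$. Since $suppE_{kl}=\{(k,l)\}$, we get $(k,l)=(i,j)$, and therefore $E_{ij}=E_{kl}\in S_2$.

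For item 3, the inclusion $supp(S_1S_2)\subseteq suppS_1\cdot suppS_2$ is already given by Proposition \ref{prop:suppSubset} in its set version. For the reverse inclusion, I take $(i,l)\in suppS_1\cdot suppS_2$. By definition, $(i,l) = (i,j)\cdot(j,l)$ for some $(i,j)\in suppE_{ij}$ with $E_{ij}\in S_1$ and $(j,l)\in suppE_{jl}$ with $E_{jl}\in S_2$; here I use that supports of matrix units are singletons to identify the matrices. Then $E_{ij}E_{jl}=E_{il}\in S_1S_2$, so $(i,l)\in supp(S_1S_2)$.

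There is no genuine obstacle in any of this. The only point requiring care is item 3: the zero matrix may lie in $S_1S_2$, but it contributes nothing to the support, so it does not affect either inclusion. Accordingly, the definition of the product of supports discards the formal symbol $0$, so only the nonzero products $(i,j)\cdot(k,l)$ with $j=k$ appear.
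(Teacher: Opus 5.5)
Your proof is correct and follows the paper's argument essentially item by item: item 1 comes from $E_{ij}E_{kl}=\delta_{jk}E_{il}$, item 2 from the fact that a matrix unit is determined by its singleton support, and item 3 from citing Proposition \ref{prop:suppSubset} for one inclusion and exhibiting $E_{ij}E_{jl}=E_{il}\in S_1S_2$ for the other. The only cosmetic difference is that you get the forward direction of item 2 from monotonicity of the union defining $supp$, which does not need the matrix-unit hypothesis, whereas the paper passes through $E_{ij}\in S_1$.
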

	
	\begin{proof} 
		
		1) It is obvious.
		
		\noindent 2) Let us prove the necessity. Let $(i, j) \in suppS_1$. Since $S_1$ consists of matrix units, then $E_{ij} \in S_1$. Accordingly, $E_{ij} \in S_2$ and $(i,j) \in suppS_2$.
		
		Let us prove the sufficiency. Let $E_{ij} \in S_1$, then $(i,j) \in suppS_1$. Consequently, $(i, j) \in suppS_2$, and since $S_2$ consists of matrix units, it follows that $E_{ij} \in S_2$.
		
		\noindent 3) Embedding $supp(S_1S_2) \subseteq suppS_1\cdot suppS_2$ is a direct consequence of Proposition \ref{prop:suppSubset}. Let us prove inverse embedding. Let $(i,k) \in suppS_1 \cdot suppS_2$, i.e. $(i, j) \in suppS_1$, $(j, k) \in suppS_2$ for some $j$. Consequently, there are two matrix units $E_{ij} \in S_1$, $E_{jk} \in S_2$, therefore, $E_{ik} = E_{ij}E_{jk} \in S_1S_2$ and $(i, k) = (i, j)\cdot (j, k) \in supp(S_1S_2)$.
	\end{proof}
	
	\begin{definition}
		
		Consider an arbitrary $G$-graded matrix algebra $\displaystyle L = \bigoplus_{g \in G} L_g$. The support group $Supp_L(G)$ of the algebra $L$ will be called the normal closure in the group $G$ of the following set:
		\[\{g_1\cdot\ldots\cdot g_n  (h_1\cdot\ldots\cdot h_m)^{-1} \ | \ suppL_{g_1} \cdot\ldots\cdot suppL_{g_n} \cap suppL_{h_1} \cdot\ldots\cdot supp L_{h_m} \neq \emptyset \}.\]
	\end{definition}
	
	The following theorem shows that if $L$ is a $G$-graded matrix algebra over a division ring $\mathbb{D}$, it can be identically embedded into an elementary $G/Supp_L(G)$-graded algebra.

	\begin{theor}\label{theor:main}
		
		Let $\displaystyle L = \bigoplus_{g\in G} L_g$ be a $G$-graded matrix algebra over a division ring $\mathbb{D}$, $H = G/Supp_L(G)$; then the algebra $L$ is identically embedded into an elementary $H$-graded algebra as a graded algebra in the sense of definition \ref{def:gradedHom}.
	\end{theor}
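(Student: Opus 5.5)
The plan is to reduce Theorem \ref{theor:main} to Lemma \ref{lemma:homgenousPut}. Let $N = Supp_L(G)$ and $H = G/N$, with projection $\pi\colon G \to H$. We shall build an auxiliary $H$-graded subalgebra $\widehat L \leqslant M_n(\mathbb D)$ with a natural basis that contains $L$, in such a way that $L_g \subseteq \widehat L_{\pi(g)}$. Lemma \ref{lemma:homgenousPut} then embeds $\widehat L$ identically into an elementary $H$-graded $M_n(\mathbb D)$. Restricting to $L$ gives the required map.

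\textbf{Construction of $\widehat L$.} Define
\[\widehat L = span\{E_{ij} \ | \ (i,j) \in suppL_{g_1}\cdot\ldots\cdot suppL_{g_m}, \ m \geqslant 1, \ g_1,\ldots,g_m \in G\}.\]
The set of pairs occurring here is closed under the product of pairs, because concatenating two products is again a product. By item 3 of Proposition \ref{prop:multSuppUnit}, $\widehat L$ is therefore a subalgebra spanned by matrix units. Every $A \in L$ is a sum of homogeneous components, and the support of each component lies in some $suppL_g$. Hence $suppA \subseteq supp\widehat L$, and so $L \subseteq \widehat L$.

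We assign to $E_{ij}$ the degree $\pi(g_1\cdots g_m)$ whenever $(i,j) \in suppL_{g_1}\cdots suppL_{g_m}$. This degree is well defined: if $(i,j)$ also lies in $suppL_{h_1}\cdots suppL_{h_k}$, then the intersection of the two products is nonempty. By the definition of the support group, $g_1\cdots g_m(h_1\cdots h_k)^{-1} \in N$, so $\pi(g_1\cdots g_m) = \pi(h_1\cdots h_k)$.

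\textbf{$\widehat L$ is graded with a natural basis.} Set $\widehat L_h$ to be the span of the $E_{ij}$ of degree $h$. This gives a direct sum decomposition, and each matrix unit is homogeneous. For multiplicativity, suppose $E_{ij}$ has degree $\pi(g_1\cdots g_m)$ and $E_{jk}$ has degree $\pi(h_1\cdots h_k)$. Then $(i,k)$ lies in the product of the $m+k$ supports $suppL_{g_1}\cdots suppL_{g_m}\cdot suppL_{h_1}\cdots suppL_{h_k}$. Its degree is therefore $\pi(g_1\cdots g_m h_1\cdots h_k)$, which is the product of the two degrees.

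\textbf{$L_g \subseteq \widehat L_{\pi(g)}$.} Every pair in $suppL_g$ has degree $\pi(g)$, which gives the inclusion.

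\textbf{Conclusion.} Apply Lemma \ref{lemma:homgenousPut} to $\widehat L$ as an $H$-graded algebra with a natural basis. This yields a tuple in $H^n$ such that $\widehat L_h \subseteq R_h$ for the resulting elementary grading $R$. Hence the identity map sends $L_g$ into $R_{\pi(g)}$.

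It remains to check Definition \ref{def:gradedHom}. Condition 1 holds with $h = \pi(g)$. For condition 2, if $L_{g}$ is nonzero it lies in only one component $R_h$, so $h = \pi(g)$ is forced. Then $\pi(g_1g_2) = \pi(g_1)\pi(g_2)$ gives condition 2. If some $L_g = 0$, the inclusion in condition 2 is trivial for any choice of $h$.

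\textbf{Main obstacle.} The key step is the well-definedness of the degree on $\widehat L$. It is exactly what the normal closure in the definition of $Supp_L(G)$ is designed to give. One must be careful that it is the supports of the homogeneous components $L_g$, not of $L$ itself, that enter the products. A remaining subtlety is that $\widehat L$ must be an algebra, not just a set closed under products. This follows because the span of a product-closed set of matrix units is closed under multiplication.
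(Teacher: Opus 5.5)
\textbf{Verdict.} Your construction matches the paper's proof, but the final appeal to Lemma~\ref{lemma:homgenousPut} fails: that lemma is false as stated, so the argument does not go through. The gap is shared with the paper. Your $\widehat L_h$ is the paper's $R_h=span\{M_h\}$. Your well-definedness of degrees is its proof that $\sum_h R_h$ is direct, and your concatenation step is its proof of $R_hR_s\subseteq R_{hs}$.

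\textbf{Counterexample.} Take $G=\mathbb Z$ (written additively) and
\[
L=span\{E_{11},E_{22},E_{33},E_{44},E_{13},E_{14},E_{23},E_{24}\}\leqslant M_4(\mathbb D),
\]
with $L_1=\mathbb D E_{24}$ and $L_0$ spanned by the other seven units.
\begin{enumerate}
\item A product of two of these units is nonzero only if one factor is diagonal. So $L$ is a unital subalgebra, degrees add, and $L$ is a $\mathbb Z$-graded algebra with a natural basis.
\item A directed chain of pairs from $\bigcup_g suppL_g$ contains at most one off-diagonal pair. Hence $suppL_{g_1}\cdots suppL_{g_m}$ is $suppL_0$ if all $g_k=0$, is $\{(2,4)\}$ if exactly one $g_k$ is $1$ and the rest are $0$, and is empty otherwise.
\item Therefore $Supp_L(G)=0$, $H=\mathbb Z$, and your $\widehat L$ is $L$ itself with its original grading.
\item For any elementary grading with tuple $(c_1,\dots,c_4)$ one has $c_2c_4^{-1}=(c_2c_3^{-1})(c_1c_3^{-1})^{-1}(c_1c_4^{-1})$. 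So $E_{13},E_{14},E_{23}\in R_0$ forces $E_{24}\in R_0$, whereas it must lie in $R_1$.
\end{enumerate}
Thus no elementary $H$-grading $R$ satisfies $\widehat L_h\subseteq R_h$. The conclusion you derive, that the identity sends $L_g$ into $R_{\pi(g)}$, is false for this $L$. In the paper's proof of the lemma, the error is the claim that $\varphi$ extends consistently to $V(\Gamma)$. The vertex $v_{12}$ is reached through $v_{13},v_{32}$ and through $v_{14},v_{42}$, which give the values $0$ and $-1$.

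\textbf{What is missing, and a repair.} An elementary grading forces relations around \emph{undirected} cycles of the graph on $\{1,\dots,n\}$ whose edges are the pairs in $\bigcup_g suppL_g$; an edge traversed backwards contributes the inverse degree. $Supp_L(G)$ only compares two directed chains with common endpoints. In the example, the cycle $1\to3\leftarrow2\to4\leftarrow1$ yields the element $1$, which is not in $Supp_L(G)=0$.

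If you replace $Supp_L(G)$ by the normal closure $N'$ of all such cycle products, the statement becomes true, and neither $\widehat L$ nor Lemma~\ref{lemma:homgenousPut} is needed. Fix a root in each connected component and let $x_i\in G/N'$ be the product of degrees along any walk from $i$ to the root. This is walk-independent precisely by the definition of $N'$, and it gives $x_ix_j^{-1}=gN'$ for every $(i,j)\in suppL_g$. Moreover $N'\supseteq Supp_L(G)$, and $N'$ is minimal by a telescoping argument. In the example $N'=G$.

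\textbf{A smaller point.} A zero component does not make condition~2 of Definition~\ref{def:gradedHom} trivial. If $L_{g_1}=0\neq L_{g_1g_2}$, then $h_1$ is unconstrained, and $L_{g_1g_2}\subseteq R_{h_1h_2}$ generally fails. The definition has to be read with $h=\pi(g)$, as the paper tacitly does.
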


	\begin{proof} 
		
		Define a series of sets for each $h \in H$:
		\[M_h = \{E_{ij} \ | \ \exists g_1, \ldots, g_n \in G : \prod g_i \in h; (i,j) \in \prod suppL_{g_i}\}.\]
		
		\noindent We assign the linear span over $\mathbb{D}$ to each set $M_h$:
		\[R_h = span\{M_h\}.\]
		
		\noindent and consider the $\mathbb D$-module $\displaystyle R = \sum_{h\in H}R_h$. We prove that this sum is direct.
		
		If the sum is not direct, there exists an element $A \in R$ such that it has a non-unique decomposition by the elements from the submodules $R_h, h \in H$. Consequently, $\displaystyle A = \sum_{h \in H} A_h = \sum_{h \in H} B_h$ and $\displaystyle \sum_{h \in H}(A_h - B_h) = 0$. By assumption, there exists an element $h_0 \in H$ such that

		\begin{equation}\label{directEq1}
			0 \neq A_{h_0} - B_{h_0} = \sum_{h \neq h_0}(B_h - A_h). 
		\end{equation}
		
		\noindent Multiply expression \eqref{directEq1} by all diagonal matrix units from $M_n(\mathbb D)$ on the left and on the right.
		
		\begin{equation}\label{directEq2}
			E_{ii}(A_{h_0} - B_{h_0})E_{jj} = \sum_{h \neq h_0}E_{ii}(B_h - A_h)E_{jj}. 
		\end{equation}
		
		\noindent If all the expressions from \eqref{directEq2} are equal to zero, then $A_{h_0} - B_{h_0} = 0$. It contradicts \eqref{directEq1}. Consequently, there exist $i$ and $j$ such that \eqref{directEq2} is not equal to zero. Thus, for some $i, j$, the following holds:
		
		\begin{equation}\label{directEq3}
			0 \neq E_{ii}(A_{h_0} - B_{h_0})E_{jj} = \sum_{h \neq h_0}E_{ii}(B_h - A_h)E_{jj}.  
		\end{equation}
		
		\noindent It follows from \eqref{directEq3} that $(i, j) \in supp R_{h_0}$ and $(i, j) \in \bigcup_{h \neq h_0}suppR_h$. Taking the latter into account as well as the fact that each module $R_h$ is the linear span of certain matrix units, we conclude that $E_{ij}$ lies simultaneously in $R_{h_0}$ and in $\displaystyle \sum_{h \neq h_0}R_h$. Consequently, there exists an element $h_1 \in H\backslash \{h_0\}$ such that $E_{ij} \in R_{h_0}, R_{h_1}$. Hence, by the definition of $R_{h_0}$ and $R_{h_1}$, there are two sequences of elements $g_{0, 1}, g_{0, 2}, \ldots, g_{0, n},\  g_{1, 1}, g_{1, 2}, \ldots, g_{1, m} \in G$ such that
		
		\begin{enumerate}
			\item $\displaystyle \prod_{k = 1}^n g_{0, k} \in h_0$, $\displaystyle \prod_{k = 1}^m g_{1, k} \in h_1$,
			\item $\displaystyle (i, j) \in \prod_{k = 1}^n supp L_{g_{0, k}}$, $\displaystyle (i, j) \in \prod_{k = 1}^m supp L_{g_{1, k}}$.
		\end{enumerate}
		
		\noindent It follows from the second item that $\prod supp L_{g_{0, k}} \cap \prod supp L_{g_{1, k}} \neq \emptyset$. Consequently, $\prod g_{0, k}(\prod g_{1, k})^{-1} \in Supp_L(G)$, and the element $\prod g_{0, k}(\prod g_{1, k})^{-1}$ represents the coset of the unit in $H$. Consequently, $h_0 = h_1$ in the group $H$. However, we had the element $h_1 \in H \backslash \{h_0\}$. So, there is a contradiction, and hence the sum $\displaystyle R = \bigoplus_{h\in H}R_h$ is direct.
		
		We prove that embedding $R_h R_s \subseteq R_{hs}$ for arbitrary $h, s \in H$ is true. First, it is necessary to show that $R_h R_s \subseteq R_{hs} \Leftrightarrow M_hM_s \subseteq M_{hs}$. Necessity: since $R_h R_s \subseteq R_{hs}$, then $M_hM_s \subseteq R_{hs}$. By item 1 of proposition $\ref{prop:multSuppUnit}$, $M_hM_s$ is a set consisting only of matrix units, but $M_{hs}$ is the maximal subset of $R_{hs}$ consisting of matrix units. Consequently, $M_hM_s \subseteq M_{hs}$. Sufficiency is obvious, since $R_h, R_s$, and $R_{hs}$ are the linear spans of $M_h, M_s$, and $M_{hs}$, respectively.  
		
		According to item 2 of proposition \ref{prop:multSuppUnit}, it is true that $M_hM_s \subseteq M_{hs} \Leftrightarrow supp(M_hM_s) \subseteq supp(M_{hs})$, and by item 3 of the same proposition $supp(M_hM_s) \subseteq supp(M_{hs}) \Leftrightarrow supp(M_h)\cdot supp(M_s) \subseteq supp(M_{hs})$. Thus, the following chain of equivalences is true:
		\[R_h R_s \subseteq R_{hs} \Leftrightarrow M_hM_s \subseteq M_{hs} \Leftrightarrow supp(M_hM_s) \subseteq suppM_{hs} \Leftrightarrow suppM_h\cdot suppM_s \subseteq suppM_{hs}.\]
		
		\noindent I.e., to prove embedding $R_hR_s \subseteq R_{hs}$, it is necessary to prove embedding supports $suppM_h\cdot suppM_s \subseteq suppM_{hs}$. By the definition of multiplication of supports, the following equality holds:
		\[suppM_h\cdot suppM_s = \{(i,k)|\exists g_1, \ldots, g_n, \ t_1, \ldots, t_m \in G: \prod g_k \in h, \prod t_k \in s, \]
		\[\hspace{15em} \exists (i,j) \in \prod suppL_{g_k}, (j, k) \in \prod suppL_{t_k}\}.\]
		
		Let us relabel the elements of this set: $r_i = g_i$, $i = 1, \ldots, n$, and $r_{n+i} = t_i$, $i = 1, \ldots, m$. Then this set will be written as follows:
		\[suppM_h\cdot suppM_s = \{(i, k) | \exists r_1, \ldots r_{n+m} \in G: \prod r_i \in hs, (i, k) \in \prod suppL_{r_i}\}.\]
		
		\noindent It is easy to see that the latter set is a subset of $suppM_{hs}$. Thus, the inclusion $R_hR_s \subseteq R_{hs}$ has been established for arbitrary $h, s \in H$. 
		
		Also, it follows from $R_hR_s \subseteq R_{hs}$ that $R$ is closed under multiplication. It is enough to consider two elements $a, b \in R$, decompose them into projections $a = \sum_{h \in H} a_h$, $b = \sum_{s \in H} b_s$, where $a_h, b_h \in R_h$, and multiply: $ab = (\sum_{h \in H} a_h)(\sum_{s \in H} b_s) = \sum_{h,s\in H}a_hb_s$. By what has been proved, all products $a_hb_s$ lie in the homogeneous components $R_{hs}$. Consequently, the product of the elements $a$ and $b$ lies in the sum of the homogeneous components, i.e., lies in $R$.
		
		Thus, $R$ is an $H$-graded algebra over the division ring $\mathbb D$. By construction, the algebra $R$ is the linear span of certain matrix units, with all matrix units in $R$ being homogeneous, i.e., $R$ is an $H$-graded algebra with a natural basis. Consequently, by Lemma \ref{lemma:homgenousPut}, the algebra $R$ is identically embedded into an elementary $H$-graded algebra $C = M_n(\mathbb D)$ as a graded algebra, i.e., there exists an injective homomorphism of H-graded algebras:
		\[\xi: R \to C; \hspace{2em} \xi: a \mapsto a.\]
		
		It should be noted that $L$ is identically embedded into $R$ in the sense of Definition \ref{def:gradedHom}. It is clear that $L \subseteq R$ by construction. It means that the identity homomorphism is defined:
		\[\tau: L \to R; \hspace{2em} \tau: a \mapsto a.\]
		
		We prove that $\tau$ satisfies Definition \ref{def:gradedHom}. Consider the homogeneous component $L_g$, $g \in G$. Let $h \in H$ be a coset of the element $g$. By the definition of $M_h$, the inclusion $suppL_g \subseteq suppM_h$ holds. Consequently, $L_g \subseteq R_h$, or, in the terms of introduced homomorphism, $\tau( L_g) \subseteq R_h$. Let $g_1, g_2 \in G$ and $h_1, h_2 \in H$ be the cosets of the elements $g_1$ and $g_2$, respectively. Then, as it was proved above, $\tau (L_{g_1}) \subseteq R_{h_1}$, $\tau (L_{g_2}) \subseteq R_{h_2}$. Since $h_1h_2$ is the coset of the element $g_1g_2$, embedding $\tau (L_{g_1g_2}) \subseteq R_{h_1h_2}$ is valid. Thus, $\tau$ satisfies definition \ref{def:gradedHom}, and $L$ is identically embedded into $R$ as a graded algebra. So, the homomorphism $\xi\circ\tau$ embeds the algebra $L$ into the elementary $H$-graded algebra $C$.
	\end{proof}
	
	Let us establish some properties of this embedding. To begin with, consider a certain counterexample which would seem to nullify the results of the previous theorem. It is obvious that any graded matrix subalgebra is identically embedded into a trivially graded matrix algebra in the sense of \ref{def:gradedHom}.
	
	\begin{example}
		
		Consider a $G$-graded subalgebra $L \leqslant M_n(\mathbb{D})$. It is obvious that it is embedded into the $G/G$-graded algebra $M_n(\mathbb{D})$, which is elementary graded, since the group $G/G$ is trivial.
	\end{example}
	
	Let us show that the embedding constructed in the previous theorem is somehow the best possible one.
	
	\begin{lemma}
		
		Let $\displaystyle L = \bigoplus_{g\in G} L_g$ be a $G$-graded matrix algebra over a division ring $\mathbb{D}$. The subgroup $Supp_L(G) \triangleleft G$ is the minimal one such that $L$ is embedded identically into an elementary $G/Supp_L(G)$-graded algebra.
	\end{lemma}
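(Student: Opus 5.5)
The claim has two parts. Existence is Theorem \ref{theor:main}. What remains is minimality: if $N \triangleleft G$ is a normal subgroup such that $L$ embeds identically, in the sense of Definition \ref{def:gradedHom}, into an elementary $G/N$-graded algebra $M_n(\mathbb D)$, then $Supp_L(G) \subseteq N$. Since $Supp_L(G)$ is by definition the normal closure of the set of elements $g_1\cdots g_n(h_1\cdots h_m)^{-1}$ with $suppL_{g_1}\cdots suppL_{g_n}\cap suppL_{h_1}\cdots suppL_{h_m}\neq\emptyset$, and $N$ is normal, it suffices to show that each such generator lies in $N$.

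Fix such an elementary $G/N$-grading, given by a tuple $(\bar x_1,\ldots,\bar x_n)$ of elements of $G/N$. Let $\tau$ be the identity embedding. I read Definition \ref{def:gradedHom} in the natural way: $\tau(L_g)$ lies in the component indexed by the coset $gN$, that is, the induced map on degrees is the projection $G\to G/N$. This reading must be made explicit, because condition 2 of the definition only says the assignment is compatible with products. If needed, I will argue that a homomorphism-like assignment into a quotient group that is used for an "embedding into a $G/N$-graded algebra" is the canonical one. This is the step I expect to be the main obstacle, since the definition is loose here.

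The key step concerns the elementary grading. Let $A\in L_g$ be nonzero and let $(i,j)\in suppA$. The homogeneous component of degree $gN$ in the elementary grading is spanned by the matrix units $E_{ij}$ with $\bar x_i\bar x_j^{-1}=gN$, and $A$ lies in this component. Hence every $(i,j)\in suppL_g$ satisfies $\bar x_i\bar x_j^{-1}=gN$.

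Now induct along products of supports. If $(i,k)\in suppL_{g_1}\cdots suppL_{g_n}$, there is a chain $(i,j_1)\in suppL_{g_1}$, $(j_1,j_2)\in suppL_{g_2},\ldots,(j_{n-1},k)\in suppL_{g_n}$. The products telescope:
\[\bar x_i\bar x_k^{-1}=\prod_t \bar x_{j_{t-1}}\bar x_{j_t}^{-1}=g_1\cdots g_nN.\]
Suppose $(i,k)$ also lies in $suppL_{h_1}\cdots suppL_{h_m}$. The same computation gives $\bar x_i\bar x_k^{-1}=h_1\cdots h_mN$. Therefore $g_1\cdots g_n(h_1\cdots h_m)^{-1}\in N$. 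Every generator lies in $N$ and $N$ is normal, so $Supp_L(G)\subseteq N$. This proves minimality.
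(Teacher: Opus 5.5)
Your proof is correct and is essentially the paper's argument, run directly rather than by contradiction. The paper's key step is the same as yours: each $(l_{k-1},l_k)\in supp L_{g_k}$ forces $E_{l_{k-1}l_k}\in C_{\overline{g_k}}$, so $E_{ij}$ lies in both $C_{\overline{g_1\cdots g_n}}$ and $C_{\overline{h_1\cdots h_m}}$, which is exactly your telescoping of $\bar x_i\bar x_k^{-1}$.

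The paper also tacitly uses your reading that degrees pass through the projection $G\to G/N$. You should state this as the intended meaning rather than try to derive it from Definition \ref{def:gradedHom}, because it does not follow from that definition. For the algebra of Example \ref{example:kleinM2}, the identity map into $M_2(\mathbb F)$, elementary graded by a constant tuple with $N=\{1\}$, satisfies the definition literally, yet $Supp_L(G)=\{1,a\}\not\subseteq\{1\}$.
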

	
	\begin{proof}
		
		Consider the set $Norm(G)$ of normal subgroups in $G$:
		\[Norm(G) = \{K \triangleleft G\}.\]
		
		It is evident that the given set is partially ordered by inclusion. Moreover, it is a lattice. This lattice contains the support group $Supp_L(G) \in Norm(G)$. Consider the upper ideal $I$ of the element $Supp_L(G)$ in this lattice
		\[I = \{K \in Norm(G) \ | \ Supp_L(G) \subseteq K\}.\]
		
		It is obvious that all elements of this ideal are such subgroups $K$ that $L$ is identically embedded into a $G/K$-elementary graded algebra. While proving Theorem \ref{theor:main}, one only has to replace $Supp_L(G)$ by $K$ such that $Supp_L(G) \subseteq K$, $K \triangleleft G$. At the same time, $Supp_L(G)$ is minimal among all such groups from $I$.
		
		Now let $K \in Norm(G)$, $K \not\in I$. Then in $G$ there exist elements $g_1, \ldots g_n$ and $h_1, \ldots , h_m$ such that $\prod_{k = 1}^n suppL_{g_k} \cap \prod_{s = 1}^m supp L_{h_s} \neq \emptyset$ and $\prod_{k = 1}^ng_k(\prod_{s = 1}^mh_s)^{-1} \not\in K$. Assume the opposite. Consider all sequences of elements $g_k$, $h_s \in G$ such that $\prod_{k = 1}^n suppL_{g_k} \cap \prod_{s = 1}^m supp L_{h_s} \neq \emptyset$. By assumption, all elements $(\prod_{k = 1}^ng_k)(\prod_{s = 1}^mh_s)^{-1} \in K$, and hence the group $K$ contains the subgroup $Supp_L(G)$, because the group $Supp_L(G)$ is minimal containing all elements $(\prod_{k = 1}^ng_k)(\prod_{s = 1}^mh_s)^{-1}$. It contradicts the fact that $K \not\in I$.
		
		Since $\prod suppL_{g_k} \cap \prod supp L_{h_s} \neq \emptyset$, let us fix  the pair from the intersection $(i, j) \in \prod suppL_{g_k} \cap \prod supp L_{h_s}$ and the group $H = G/K$. Suppose that the algebra $L$ is identically embedded into an elementary $H$-graded algebra$C = M_n(\mathbb D)$. As $(i, j) \in \prod suppL_{g_k}$, there exist pairs $(l_0, l_1), (l_1, l_2), \ldots, (l_{n-1}, l_n)$, where $l_0 = i$, $l_n = j$, such that $(l_{k-1}, l_k) \in supp L_{g_k}$. Since we are considering an identical embedding and in the elementary graded algebra $C$, all matrix units are homogeneous and the following holds: $E_{l_{k-1}l_k} \in C_{\overline {g_k}}$, and hence $E_{ij} = \prod_{k=1}^n E_{l_{k-1}l_k} \in C_{\overline{g_1\cdot\ldots\cdot g_n}}$. Carrying out analogous considerations for the elements $h_s$, we obtain that $E_{ij} \in C_{\overline {h_1\cdot\ldots\cdot h_m}}$. Consequently, $C_{\overline{g_1\cdot\ldots\cdot g_n}} = C_{\overline {h_1\cdot\ldots\cdot h_m}}$ and $\overline{g_1\cdot\ldots\cdot g_n} =\overline {h_1\cdot\ldots\cdot h_m}$ in the group $H$, which is equivalent to $(g_1\cdot\ldots\cdot g_n)(h_1\cdot\ldots\cdot h_m)^{-1} \in K$. There is a contradiction.
	\end{proof}
	
	Denote the elements of the group $\mathbb Z_2 \times Z_2$ by $\{1,a,b,c\}$, where $1$ is the neutral element in the group, but all the others are of order $2$. Paper \cite{BahSeg} presents an example of the grading on the algebra $M_2(\mathbb F)$, where $char \mathbb F \neq 2$, and it is not elementary. Let us recall it:
	
	\begin{example}[{\cite{BahSeg}}]\label{example:kleinM2}
		
		Let $\mathbb{F}$ be a field, $char \mathbb{F} \neq 2$. Consider the $\mathbb Z_2\times\mathbb Z_2$-grading on the algebra $L = M_2(\mathbb{F})$, where the components $L_1, L_a, L_b, L_c$ are the linear spans of the matrices
		\[
		\begin{pmatrix} 
			1 & 0 \\ 
			0 & 1 
		\end{pmatrix},
		\begin{pmatrix} 
			1 & 0 \\ 
			0 & -1 
		\end{pmatrix},
		\begin{pmatrix} 
			0 & 1 \\ 
			1 & 0 
		\end{pmatrix},
		\begin{pmatrix} 
			0 & 1 \\ 
			-1 & 0 
		\end{pmatrix}
		\]
		respectively. Since $L_{1} = Z(L)$, $L$ is not elementary graded in accordance with Corollary \ref{coro:badCenter}.
		
	\end{example}
	
	Let us show how to embed this algebra into an elementary graded algebra, where the grading is assigned by a certain factor group of the group $\mathbb Z_2\times\mathbb Z_2$.
	
	\begin{example}
		
		Let $L$ be the algebra from Example \ref{example:kleinM2}. Find $suppL_g$ for all $g \in \mathbb Z_2 \times\mathbb Z_2$.
		
		\begin{enumerate}
			\item[] $suppL_{1} = suppL_{a} = \{(1,1), (2,2)\}$;
			\item[] $suppL_{b} = suppL_{c} = \{(1,2), (2,1)\}$.
		\end{enumerate}
		
		Next, it is checked by direct computations that $suppL_g \cdot suppL_h = supp L_{gh}$ for all $g, h \in \mathbb Z_2 \times\mathbb Z_2$. Thus, any product of a finite number of supports $suppL_{g_1} \cdot \ldots \cdot suppL_{g_n}$ will be equal to $suppL_g$, where $g = g_1\cdot\ldots\cdot g_n \in \mathbb Z_2 \times\mathbb Z_2$.
		Obviously, a non-empty intersection occurs only in two cases: $suppL_{1} \cap suppL_{a} \neq \emptyset$ and $suppL_{b} \cap suppL_{c} \neq \emptyset$. Thus, $Supp_L(\mathbb Z_2 \times\mathbb Z_2)$ is the normal closure of the following set:
		\[\{1\cdot a^{-1}, b \cdot c^{-1}\}\]
		\[Supp_L(\mathbb Z_2 \times\mathbb Z_2) = \{1, a\}\]
		
		Consider the group $H = (\mathbb Z_2 \times\mathbb Z_2)/Supp_L(\mathbb Z_2 \times\mathbb Z_2)$. it is established by direct computations that $H = \{\overline{1}, \overline{b}\} \cong \mathbb Z_2$, where $\overline{1} = \{1, a\}$, $\overline{b} = \{b, c\}$.
		
		Consider the elementary $H$-graded algebra $R = M_2(\mathbb{F})$ (we will establish the specific grading later). It follows from the previous paragraph that the embedding $\tau\colon L \to R$ constructed in Theorem \ref{theor:main} will map the homogeneous components according to the following rule:
		\[\tau(L_{1}), \tau(L_{a}) \subseteq R_{\overline{1}}, \hspace{2em} \tau(L_{b}), \tau(L_{c}) \subseteq R_{\overline{b}}.\]
		
		\noindent Thus, it is not difficult to determine in what homogeneous components of the elementary $H$-graded algebra $R$ the matrix units lie: $E_{11}, E_{22} \in R_{\overline{1}}$, $E_{12}, E_{21} \in R_{\overline{b}}$. Consequently, the algebra $R$ is graded by a tuple $(g_1, g_2) \in H \times H$ such that $g_1\cdot g_1^{-1} = g_2\cdot g_2^{-1} = \overline{1}$, $g_1\cdot g_2^{-1} = g_2 \cdot g_1^{-1} = \overline{b}$. Obviously, there are only two such tuples in $H\times H$. The first is $(\overline{1}, \overline{b})$, the second is $(\overline{b}, \overline{1})$. Note that the second tuple can be obtained from the first one by shifting on the element $\overline b \in H$ in accordance with Proposition \ref{prop:shift}.

	\end{example}
	
	\bigskip
	
	\section*{Acknowledgments}
	
	The work was supported by the Foundation for the Development of Theoretical Physics and Mathematics «BASIS» 23-7-2-14-1.

	\medskip
	\noindent Pavel Sokolov \\
	Novosibirsk state university; \\
	email: p.sokolov@g.nsu.ru
	
\end{document}